\DeclareMathAlphabet\gothic{U}{euf}{m}{n}
\def\eqnarray{\stepcounter{equation}\let\@currentlabel=\theequation
\global\@eqnswtrue
\tabskip\@centering\let\\=\@eqncr
$$\halign to \displaywidth\bgroup\hfil\global\@eqcnt\z@
  $\displaystyle\tabskip\z@{##}$&\global\@eqcnt\@ne
  \hfil$\displaystyle{{}##{}}$\hfil
  &\global\@eqcnt\tw@ $\displaystyle{##}$\hfil
  \tabskip\@centering&\llap{##}\tabskip\z@\cr}
\def\endeqnarray{\@@eqncr\egroup
      \global\advance\c@equation\m@ne$$\global\@ignoretrue}
\def\@yeqncr{\@ifnextchar [{\@xeqncr}{\@xeqncr[5pt]}}
\begin{document}
\bibliographystyle{tom}

\newtheorem{lemma}{Lemma}[section]
\newtheorem{thm}[lemma]{Theorem}
\newtheorem{cor}[lemma]{Corollary}
\newtheorem{prop}[lemma]{Proposition}
\newtheorem{ddefinition}[lemma]{Definition}

\theoremstyle{definition}

\newtheorem{remark}[lemma]{Remark}
\newtheorem{exam}[lemma]{Example}
\newtheorem{hyp}[lemma]{Hypothesis}

\newcommand{\gota}{\gothic{a}}
\newcommand{\gotb}{\gothic{b}}
\newcommand{\gotc}{\gothic{c}}
\newcommand{\gote}{\gothic{e}}
\newcommand{\gotf}{\gothic{f}}
\newcommand{\gotg}{\gothic{g}}
\newcommand{\gothh}{\gothic{h}}
\newcommand{\gotk}{\gothic{k}}
\newcommand{\gotl}{\gothic{l}}
\newcommand{\gotm}{\gothic{m}}
\newcommand{\gotn}{\gothic{n}}
\newcommand{\gotp}{\gothic{p}}
\newcommand{\gotq}{\gothic{q}}
\newcommand{\gotr}{\gothic{r}}
\newcommand{\gots}{\gothic{s}}
\newcommand{\gott}{\gothic{t}}
\newcommand{\gotu}{\gothic{u}}
\newcommand{\gotv}{\gothic{v}}
\newcommand{\gotw}{\gothic{w}}
\newcommand{\gotz}{\gothic{z}}
\newcommand{\gotA}{\gothic{A}}
\newcommand{\gotB}{\gothic{B}}
\newcommand{\gotG}{\gothic{G}}
\newcommand{\gotL}{\gothic{L}}
\newcommand{\gotS}{\gothic{S}}
\newcommand{\gotT}{\gothic{T}}

\newcounter{teller}
\renewcommand{\theteller}{(\alph{teller})}
\newenvironment{tabel}{\begin{list}%
{\rm  (\alph{teller})\hfill}{\usecounter{teller} \leftmargin=1.1cm
\labelwidth=1.1cm \labelsep=0cm \parsep=0cm}
                      }{\end{list}}

\newcounter{tellerr}
\renewcommand{\thetellerr}{(\roman{tellerr})}
\newenvironment{tabeleq}{\begin{list}%
{\rm  (\roman{tellerr})\hfill}{\usecounter{tellerr} \leftmargin=1.1cm
\labelwidth=1.1cm \labelsep=0cm \parsep=0cm}
                         }{\end{list}}

\newcounter{tellerrr}
\renewcommand{\thetellerrr}{(\Roman{tellerrr})}
\newenvironment{tabelR}{\begin{list}%
{\rm  (\Roman{tellerrr})\hfill}{\usecounter{tellerrr} \leftmargin=1.1cm
\labelwidth=1.1cm \labelsep=0cm \parsep=0cm}
                         }{\end{list}}

\newcounter{proofstep}
\newcommand{\nextstep}{\refstepcounter{proofstep}\vertspace \par 
          \noindent{\bf Step \theproofstep.} \hspace{5pt}}
\newcommand{\firststep}{\setcounter{proofstep}{0}\nextstep}

\newcommand{\Ni}{\mathds{N}}
\newcommand{\Qi}{\mathds{Q}}
\newcommand{\Ri}{\mathds{R}}
\newcommand{\Ci}{\mathds{C}}
\newcommand{\Ti}{\mathds{T}}
\newcommand{\Zi}{\mathds{Z}}
\newcommand{\Fi}{\mathds{F}}

\renewcommand{\proofname}{{\bf Proof}}

\newcommand{\vertspace}{\vskip10.0pt plus 4.0pt minus 6.0pt}

\newcommand{\simh}{{\stackrel{{\rm cap}}{\sim}}}
\newcommand{\ad}{{\mathop{\rm ad}}}
\newcommand{\Ad}{{\mathop{\rm Ad}}}
\newcommand{\alg}{{\mathop{\rm alg}}}
\newcommand{\clalg}{{\mathop{\overline{\rm alg}}}}
\newcommand{\Aut}{\mathop{\rm Aut}}
\newcommand{\arccot}{\mathop{\rm arccot}}
\newcommand{\capp}{{\mathop{\rm cap}}}
\newcommand{\rcapp}{{\mathop{\rm rcap}}}
\newcommand{\diam}{\mathop{\rm diam}}
\newcommand{\divv}{\mathop{\rm div}}
\newcommand{\dom}{\mathop{\rm dom}}
\newcommand{\codim}{\mathop{\rm codim}}
\newcommand{\RRe}{\mathop{\rm Re}}
\newcommand{\IIm}{\mathop{\rm Im}}
\newcommand{\tr}{{\mathop{\rm tr \,}}}
\newcommand{\Tr}{{\mathop{\rm Tr \,}}}
\newcommand{\Vol}{{\mathop{\rm Vol}}}
\newcommand{\card}{{\mathop{\rm card}}}
\newcommand{\rank}{\mathop{\rm rank}}
\newcommand{\supp}{\mathop{\rm supp}}
\newcommand{\sgn}{\mathop{\rm sign}}
\newcommand{\essinf}{\mathop{\rm ess\,inf}}
\newcommand{\esssup}{\mathop{\rm ess\,sup}}
\newcommand{\Int}{\mathop{\rm Int}}
\newcommand{\lcm}{\mathop{\rm lcm}}
\newcommand{\loc}{{\rm loc}}
\newcommand{\HS}{{\rm HS}}
\newcommand{\Trn}{{\rm Tr}}
\newcommand{\n}{{\rm N}}
\newcommand{\WOT}{{\rm WOT}}

\newcommand{\at}{@}

\newcommand{\mod}{\mathop{\rm mod}}
\newcommand{\spann}{\mathop{\rm span}}
\newcommand{\one}{\mathds{1}}

\hyphenation{groups}
\hyphenation{unitary}

\newcommand{\tfrac}[2]{{\textstyle \frac{#1}{#2}}}

\newcommand{\ca}{{\cal A}}
\newcommand{\cb}{{\cal B}}
\newcommand{\cc}{{\cal C}}
\newcommand{\cd}{{\cal D}}
\newcommand{\ce}{{\cal E}}
\newcommand{\cf}{{\cal F}}
\newcommand{\ch}{{\cal H}}
\newcommand{\chs}{{\cal HS}}
\newcommand{\ci}{{\cal I}}
\newcommand{\ck}{{\cal K}}
\newcommand{\cl}{{\cal L}}
\newcommand{\cm}{{\cal M}}
\newcommand{\cn}{{\cal N}}
\newcommand{\co}{{\cal O}}
\newcommand{\cp}{{\cal P}}
\newcommand{\cs}{{\cal S}}
\newcommand{\ct}{{\cal T}}
\newcommand{\cx}{{\cal X}}
\newcommand{\cy}{{\cal Y}}
\newcommand{\cz}{{\cal Z}}

\newlength{\hightcharacter}
\newlength{\widthcharacter}
\newcommand{\covsup}[1]{\settowidth{\widthcharacter}{$#1$}\addtolength{\widthcharacter}{-0.15em}\settoheight{\hightcharacter}{$#1$}\addtolength{\hightcharacter}{0.1ex}#1\raisebox{\hightcharacter}[0pt][0pt]{\makebox[0pt]{\hspace{-\widthcharacter}$\scriptstyle\circ$}}}
\newcommand{\cov}[1]{\settowidth{\widthcharacter}{$#1$}\addtolength{\widthcharacter}{-0.15em}\settoheight{\hightcharacter}{$#1$}\addtolength{\hightcharacter}{0.1ex}#1\raisebox{\hightcharacter}{\makebox[0pt]{\hspace{-\widthcharacter}$\scriptstyle\circ$}}}
\newcommand{\scov}[1]{\settowidth{\widthcharacter}{$#1$}\addtolength{\widthcharacter}{-0.15em}\settoheight{\hightcharacter}{$#1$}\addtolength{\hightcharacter}{0.1ex}#1\raisebox{0.7\hightcharacter}{\makebox[0pt]{\hspace{-\widthcharacter}$\scriptstyle\circ$}}}

\thispagestyle{empty}

\vspace*{1cm}
\begin{center}
{\Large\bf The diamagnetic inequality for the  \\[2mm]
Dirichlet-to-Neumann operator 
\\[10mm]
\large A.F.M. ter Elst$^1$ and E.M. Ouhabaz$^2$}

\end{center}

\vspace{5mm}

\begin{center}
{\bf Abstract}
\end{center}

\begin{list}{}{\leftmargin=1.8cm \rightmargin=1.8cm \listparindent=10mm 
   \parsep=0pt}
\item
Let $\Omega$ be a bounded domain in $\Ri^d$ with Lipschitz boundary $\Gamma$. 
We define the Dirichlet-to-Neumann operator $\mathcal N$ on $L_2(\Gamma)$ 
associated with a second order elliptic operator 
$A = -\sum_{k,j=1}^d \partial_k (c_{kl} \, \partial_l) 
   + \sum_{k=1}^d b_k \, \partial_k - \partial_k( c_k \cdot) + a_0$. 
We prove a criterion  for invariance of a closed convex set under the 
action of the semigroup of $\cn$. 
Roughly speaking, it says that if the 
semigroup generated by $-A$, endowed with Neumann boundary conditions, 
leaves invariant a closed convex set of $L_2(\Omega)$, then the `trace' of this 
convex set is invariant for the semigroup of~$\cn$. 
We use this invariance to prove a criterion for the domination of  semigroups of two 
Dirichlet-to-Neumann operators. We apply this criterion to prove the diamagnetic inequality  for such operators on $L_2(\Gamma)$. 

\end{list}

\vspace{5mm}
\noindent
April 2020

\vspace{5mm}
\noindent
Mathematics Subject Classification: 47A07, 81Q10.

\vspace{5mm}
\noindent
Keywords: Dirichlet-to-Neumann operator, invariance of convex sets, domination of semigroups, 
diamagnetic inequality.

\vspace{15mm}

\noindent
{\bf Home institutions:}    \\[3mm]
\begin{tabular}{@{}cl@{\hspace{10mm}}cl}
1. & Department of Mathematics  & 
  2. & Institut de Math\'ematiques de Bordeaux \\
& University of Auckland   & 
  & Universit\'e de Bordeaux, UMR 5251,  \\
& Private bag 92019 & 
  &351, Cours de la Lib\'eration  \\
& Auckland 1142 & 
  &  33405 Talence \\
& New Zealand  & 
  & France\\
  & terelst@math.auckland.ac.nz&
 & Elmaati.Ouhabaz@math.u-bordeaux.fr \\[8mm]
\end{tabular}

\newpage

\section{Introduction} \label{Auckland}

The well known diamagnetic inequality states that the semigroup associated with a 
Schr\"o\-din\-ger operator with a magnetic field is pointwise bounded by the 
free semigroup  of the Laplacian. 
More precisely,  let $\vec{a} = (a_1,\ldots,a_d)$ be such that each $a_k$ is  
real-valued and  locally in $L_2(\Ri^d)$.
Set $H(\vec{a}) = (\nabla-i \vec{a})^*(\nabla-i \vec{a})$.
Then the corresponding semigroup $(e^{-t H(\vec{a})})_{t \geq 0}$ satisfies
\[
| e^{-t H(\vec{a})} f | \le e^{t \Delta} | f |
\]
for all $t > 0$ and $f \in L_2(\Ri^d)$.
The same result holds in presence of a real-valued potential~$V$, i.e.,  
with operators  $H(\vec{a})  + V$ and $-\Delta + V$.\\
The diamagnetic inequality plays an important role in spectral theory of Schr\"odinger operators with magnetic potential.
We refer to \cite{Sim82} and references there.
 
 The main objective of the present paper is to prove a similar result for the 
Dirichlet-to-Neumann operator with magnetic field on the boundary $\Gamma$ of a Lipschitz domain
$\Omega$ in $\Ri^d$.
 In its most simplest case, the diamagnetic inequality we prove says that for all 
$\vec{a} \in (L_\infty(\Omega, \Ri))^d$, the solutions of the two problems
\[ 
\left\{ \begin{array}{r@{}c@{}ll}
\partial_t  \Tr u +  (\partial_\nu - i \vec{a} \cdot \nu ) u & {} = {} & 0 & \mbox{on } (0, \infty) \times \Gamma\\[5pt]
(\nabla - i \vec{a})^*(\nabla -i \vec{a}) u & = & 0 & \mbox{on }  (0, \infty) \times \Omega\\[5pt]
\Tr u & = & \varphi 
 \end{array}
\right.
 \]
and 
\[ 
\left\{ \begin{array}{r@{}c@{}ll}
\partial_t  \Tr v +  \partial_\nu v & {} = {} & 0 & \mbox{on } (0, \infty) \times \Gamma\\[5pt]
\Delta v & = & 0  & \mbox{on } (0, \infty) \times \Omega\\[5pt]
\Tr v & = & |\varphi| 
 \end{array}
\right.
 \]
satisfy
\[ 
| u(t,x) | \le v(t,x) \mbox{ for a.e.\ } (t,x) \in (0, \infty) \times \Gamma.
\]
Here $\partial_\nu$ is the normal derivative and $\nu$ is the outer normal vector to $\Omega$.
 We prove more in the sense that we are able to deal with variable and non-symmetric coefficients.
To be more  precise, we consider  $c_{kl}, b_k, c_k, a_0  \in L_\infty (\Omega, \Ri)$ 
for all $k,l \in \{ 1,\ldots,d \} $ with $(c_{kl})$ satisfying the usual ellipticity condition.
For all $\vec{a} \in (L_\infty(\Omega, \Ri))^d$ as above we consider the magnetic 
Dirichlet-to-Neumann operator $\cn(\vec{a})$ defined as follows.
If $\varphi \in H^{1/2}(\Gamma)$, we solve first 
\begin{eqnarray*}
-\sum_{k,l= 1}^d (\partial_k - i a_k)  \Big( c_{kl} \, (\partial_l -i a_l)\,  u \Big)
     + \sum_{k=1}^d b_k \, (\partial_k - ia_k) u 
    - (\partial_k - i a_k)  (c_k\,  u)  + a_0\, u & = & 0 
      \quad \mbox{on } \Omega,  \\[0pt]
\Tr u & = & \varphi 
\end{eqnarray*}
with $u \in W^{1,2}(\Omega)$ and then define $\cn(\vec{a}) \varphi$ as the conormal derivative
(when it exists as an element of $L_2(\Gamma)$).
Formally, 
\[
\cn(\vec{a}) \varphi = \sum_{k,l=1}^d \nu_k \, \Tr(c_{kl} \, \partial_l u)  
   - i \sum_{k,l=1}^d \nu_k \Tr( c_{kl} \, a_l \,  u) 
   + \sum_{k=1}^d \nu_k \, \Tr(c_k\, u)
.  \]
If $(S_{\vec{a}} (t))_{t\ge 0}$ denotes the semigroup generated by $-\cn(\vec{a})$ on $L_2(\Gamma)$, 
then we prove (under an accretivity condition) that
\[ | S_{\vec{a}}(t) \varphi | \le S_{0}(t) |\varphi |
\]
for all $t \ge 0$ and $\varphi \in L_2(\Gamma)$.
In the symmetric case, $c_{kl} = c_{lk}$ and $b_k = c_k = 0$, we obtain as a consequence 
a trace norm estimate for the eigenvalues of $\cn(\vec{a})$ and if the coefficients 
are H\"older continuous and $\Omega$ is of class $C^{1+ \kappa}$ for some 
$\kappa > 0$ we obtain that the heat kernel of $\cn(\vec{a})$ satisfies a 
Poisson upper bound on $\Gamma$.
We also prove other results on positivity (when $\vec{a} = 0$) and 
$L_\infty$-contractivity of the corresponding semigroup. 
For example, in the symmetric case  $c_{kl} = c_{lk}$, $b_k = c_k $ and $a_0$ are all real, 
 then the semigroup $S_0$ is positive if $a_0 > - \lambda_0$, where
$\lambda_0$ is the first positive eigenvalue of the elliptic operator 
 \begin{equation}\label{eq-ellip01}
   -\sum_{k,j=1}^d \partial_k (c_{kl} \, \partial_l) 
   + \sum_{k=1}^d b_k \, \partial_k - \partial_k( c_k \cdot)
   \end{equation}
   with Dirichlet boundary conditions. In other words, the quadratic form
\begin{equation}\label{pos01}
u \mapsto \gota(u,u) 
= \sum_{k,l=1}^d  \int_\Omega c_{kl} \, (\partial_l u) \, \overline{\partial_k u} 
   + \sum_{k=1}^d  \int_\Omega b_k \, (\partial_k u) \, \overline{u} + 
      b_k \, u\, \overline{ \partial_k u} + \int_\Omega a_0 \, | u |^2 
\end{equation}
is positive on $W^{1,2}_0(\Omega)$. 
It is not clear whether this latter condition remains sufficient for positivity of the semigroup 
in the non-symmetric case.
 See 
Proposition~\ref{invC-AE-New} and Section~\ref{pos}. 

It is worth mentioning that the Dirichlet-to-Neumann operator is an important map 
which appears in many problems.
In particular, it plays a fundamental role in inverse problems such as 
the Calder\'on inverse problem.
The magnetic Dirichlet-to-Neumann operator also appears in the study of inverse problems in the presence of a magnetic field.
We refer to \cite{BC} and the references therein. 

In order to prove the diamagnetic inequality we proceed by invariance of closed convex sets for an appropriate semigroup.
This idea appeared already in \cite{Ouh95}.
Despite the fact that it is an abstract result, 
the invariance result proved in \cite{Ouh95}, however, 
does not seem to apply in an efficient way to the Dirichlet-to-Neumann operator.
The reason is that in this setting one has to deal with the harmonic lifting 
(with respect to the elliptic operator) of functions and  it is not clear how 
to describe such harmonic lifting for complicated expressions 
(see Section~\ref{Whangamomona} below).
What we do is to rely first on a version from \cite{AE2} of the invariance 
criterion of \cite{Ouh95} and then prove  new criteria for invariance of 
closed convex sets which make a bridge between invariance on $L_2(\Gamma)$ for 
the Dirichlet-to-Neumann semigroup and invariance on $L_2(\Omega)$ for the 
semigroup of the elliptic operator with Neumann boundary conditions.
The latter is easier to handle. 
The result is efficient when dealing with the Dirichlet-to-Neumann operator.
The diamagnetic inequality is obtained from a domination criterion which is 
obtained by checking the invariance of the convex set
$\{ (\varphi, \psi) \in L_2(\Gamma) \times L_2(\Gamma) : |\varphi | \le \psi \}$ 
for the semigroup $\left( { \begin{array}{cc}
S_{\vec{a}}(t) & 0 \\
0 & S_{0}(t)\\
\end{array} } \right)_{t\ge0}$.

 \section{Background material}\label{Stratford}
 
 The aim of this section is to recall some well known material on sesquilinear forms and make precise several  notations which will be used throughout the paper.
 
 Let $\widetilde{H}$ be a Hilbert space with scalar product $(\cdot,\cdot)_{\widetilde{H}}$ and  associated norm 
$\| \cdot\|_{\widetilde{H}}$.
Given another Hilbert space $V$ which is continuously and densely embedded into $\widetilde H$. 
Let 
 \[ \gota \colon V \times V \to \Ci
 \]
 be a sesquilinear form.
We assume that $\gota$ is {\bf continuous} and {\bf quasi-coercive}.
This means respectively that there exist constants $M \ge 0$, $\mu > 0$ and 
$\omega \in \Ri$ such that 
\begin{eqnarray*}
| \gota(u,v)| & \le & M \, \| u \|_V \, \| v \|_V \mbox{ and}   \\
\RRe \gota(u,u) + \omega \, \| u \|_{\widetilde{H}}^2 & \ge & \mu \, \| u \|_V^2
 \end{eqnarray*}
for all $u, v \in V$.
It then follows that $\gota$ is a closed sectorial form and hence one can associate an 
operator $\widetilde{A}$ on $\widetilde H$ such that for all $(u,f) \in \widetilde H \times \widetilde H$ one has
\[
u \in D(\widetilde A)
\quad \mbox{and} \quad
\widetilde A u = f
\]
if and only if
\[
u \in V 
\quad \mbox{and} \quad
\gota(u, v) = (f, v)_{\widetilde{H}}
\mbox{ for all } v \in V
.  \]
It is a standard fact that $\widetilde{A}$ is a densely defined 
(quasi-)sectorial operator and $-\widetilde{A}$ generates a holomorphic semigroup
$\widetilde{S} = (\widetilde{S}(t))_{t\ge0}$ on $\widetilde{H}$.
See, e.g., \cite{Kat1} or \cite{Ouh5}.
 
Let now $H$ be another Hilbert space and $j \colon V \to H$ be a linear continuous map with dense range.
Suppose that the form 
$\gota \colon V \times V \to \Ci$ is continuous.
Following \cite{AE2}, we say that 
 $\gota$ is {\bf $j$-elliptic} if there exist constants $\omega \in \Ri$ and $\mu > 0$ such that
\[
 \RRe \gota(u,u) + \omega \, \|j(u) \|_H^2 \ge \mu \,  \| u \|_V^2
\]
for all $u \in V$.
In this case, there exists an operator $A$, called the operator associated with~$(\gota,j)$, 
defined as follows.
For all $(\varphi, \psi) \in H \times H$ one has  
 \[ \varphi \in D(A)
\quad \mbox{and} \quad
A \varphi = \psi  
\]
if and only if
\[ 
\mbox{there exists a } u \in V \mbox{ such that }
\left[ \begin{array}{l}
   j(u) = \varphi \mbox{ and}  \\[5pt]
   \gota(u, v) = (\psi, j(v))_H \mbox{ for all } v \in V .
       \end{array} \right.
\]
Then $A$ is well defined and $-A$ generates a holomorphic semigroup $S = (S(t))_{t\ge0}$ on $H$.
(See \cite{AE2} Theorem 2.1.)
  
 We illustrate these definitions by two important examples in which we define the 
Dirichlet-to-Neumann operator and the magnetic Dirichlet-to-Neumann operator on the 
boundary of a Lipschitz domain.
 
 \begin{exam}[The Dirichlet-to-Neumann operator] \label{xdia201}
Let $\Omega$ be a bounded Lipschitz domain in $\Ri^d$ with boundary $\Gamma$.
We denote by $\Tr  \colon W^{1,2}(\Omega) \to L_2(\Gamma)$ the trace operator.
Let $c_{kl}, b_k, c_k, a_0 \in L_\infty (\Omega, \Ci)$ for all $k, l \in \{ 1, \ldots, d \} $. 
We assume the usual ellipticity condition: there exists a constant $\mu > 0$ such that
\begin{equation}
\RRe \sum_{k,l=1}^d c_{kl}(x) \, \xi_k \, \overline{\xi_l}
\geq \mu \, |\xi|^2
\label{exdia201;5}
\end{equation}
for all $\xi \in \Ci^d$ and almost every $x \in \Omega$. 
We define the sesquilinear form $\gota \colon W^{1,2}(\Omega) \times W^{1,2}(\Omega) \to \Ci$ by
\begin{equation}\label{example1}
\gota(u,v) 
= \sum_{k,l=1}^d  \int_\Omega c_{kl} \, (\partial_l u) \, \overline{\partial_k v} 
   + \sum_{k=1}^d  \int_\Omega b_k \, (\partial_k u) \, \overline{v} + 
      c_k \, u\, \overline{ \partial_k v} + \int_\Omega a_0 \, u \, \overline{v}
.
\end{equation}
It is a basic fact that the form $\gota$ is continuous and quasi-coercive.
We denote by $A$ the operator associated with $\gota$ on $L_2(\Omega)$.
Define the operator $\mathcal A \colon W^{1,2}(\Omega) \to W^{-1,2}(\Omega)$ by
\[ 
\langle {\mathcal A} u, v \rangle_{W^{-1,2}(\Omega) \times  W^{1,2}_0(\Omega)} = \gota(u,v).
\]
Let $u \in W^{1,2}(\Omega)$ with ${\mathcal A} u \in L_2(\Omega)$ and $\psi \in L_2(\Gamma)$. 
Then we say that $u$ has {\bf weak conormal derivative} $\psi$ if 
\[ \gota(u,v) - ({\mathcal A} u, v)_{L_2(\Omega)} = (\psi, \Tr v)_{L_2(\Gamma)} 
\]
for all $v \in W^{1,2}(\Omega)$. 
Then $\psi$ is unique by the Stone--Weierstra\ss\ theorem 
and we write $\partial^{\gota}_\nu\, u = \psi$.
Formally, 
\[
\partial^\gota_\nu\,  u = \sum_{k,l=1}^d \nu_k \, \Tr(c_{kl} \, \partial_l u) +  \sum_{k=1}^d \nu_k \Tr(c_k\, u)
,  \]
where $(\nu_1, \ldots, \nu_d)$ is the outer normal vector to $\Omega$. 
Suppose now that $0$ is not in the spectrum of $\mathcal A$ endowed with Dirichlet boundary 
conditions (i.e., the form $\gota$ is taken on $V = W^{1,2}_0(\Omega)$). 
Then we say that $u \in W^{1,2}(\Omega)$ is {\bf ${\mathcal A}$-harmonic} if
\[ \gota(u,v) = 0 
\]
for all $v \in W^{1,2}_0(\Omega)$.
Since $0$ is not in the spectrum of the Dirichlet operator, 
for all $\varphi \in H^{1/2}(\Gamma)$ there exists a unique ${\mathcal A}$-harmonic
$u \in H^1(\Omega)$ such that $\Tr u = \varphi$.
We then define on $L_2(\Gamma)$ the form $\gotb \colon  H^{1/2}(\Gamma) \times H^{1/2}(\Gamma) \to \Ci$
by
\begin{equation}\label{eq-b}
\gotb(\varphi, \xi) := \gota(u,v) ,
\end{equation}
where $u, v \in W^{1,2}(\Omega)$ are ${\mathcal A}$-harmonic with $\Tr u = \varphi$ 
and $\Tr v = \xi$, respectively. 
One proves that the form
$\gotb$ is continuous, sectorial and closed.
The associated operator $\cn$ is the Dirichlet-to-Neumann operator.
  For more details see \cite{EO4} Section~2, \cite{EO6} Section~2 or \cite{ArM2}.
The operator $\cn$ is interpreted as follows. 
For all $\varphi \in H^{1/2}(\Gamma)$, one  solves the Dirichlet problem
\begin{eqnarray*}
-\sum_{k,l= 1}^d \partial_k \Big( c_{kl} \, \partial_l \,  u \Big) + \sum_{k=1}^d b_k \, \partial_k u 
    - \partial_k (c_k\,  u)  + a_0\, u & = & 0 
      \quad \mbox{weakly in } \Omega,   \\[0pt]
\Tr u & = & \varphi 
\end{eqnarray*}
with $u \in W^{1,2}(\Omega)$ and 
if $u$ has a weak conormal derivative, then $\varphi \in D(\cn)$ and 
$\cn  \varphi = \partial^\gota _\nu\, u$.
Alternatively,  let $j := \Tr$ and $H = L_2(\Gamma)$. 
Suppose in addition that $\gota$ is $j$-elliptic, 
that is, suppose that $\RRe a_0$ is large enough.
 Then one checks easily that $\cn$ is the operator associated with $(\gota, j)$. 
\end{exam}

\begin{exam}[The magnetic Dirichlet-to-Neumann operator] \label{xdia202}
Let $\vec{a} := (a_1, \ldots, a_d)$ be such that  $a_k \in L_\infty(\Omega, \Ri)$ 
for all $k \in \{ 1, \ldots, d \} $.
Set
\[
D_k := \partial_k - i a_k
\]
for all $k \in \{ 1, \ldots, d \} $.
We define as above $\gota(\vec{a}) \colon W^{1,2}(\Omega) \times W^{1,2}(\Omega) \to \Ci$ by
\begin{equation}\label{example2}
\gota(\vec{a})(u,v) 
= \sum_{k,l=1}^d  \int_\Omega c_{kl} \, (D_l u) \, \overline{D_k v} 
   + \sum_{k=1}^d  \int_\Omega b_k \, (D_k u) \, \overline{v} 
      + c_k \, u\, \overline{ D_k v} + \int_\Omega a_0 \, u \, \overline{v}.
\end{equation}
Then one can define exactly as above the associated operator 
$A(\vec{a})$ on $L_2(\Omega)$ as well as the magnetic Dirichlet-to-Neumann operator 
$\cn(\vec{a})$. 
Formally, if $u \in W^{1,2}(\Omega)$  is 
${\mathcal A}(\vec{a})$-harmonic with trace $\Tr u = \varphi$, then 
\[
\cn(\vec{a}) \varphi 
= \partial^{\gota(\vec{a})}_\nu\,  u 
= \sum_{k,l=1}^d \nu_k \, \Tr(c_{kl} \, \partial_l u)  
   - i \sum_{k,l=1}^d \nu_k \Tr(c_{kl} \, a_l \,  u) 
   + \sum_{k=1}^d \nu_k \, \Tr(c_k\, u).
\]
\end{exam}

 \section{Invariance of closed convex sets}\label{Taranaki}
 
As previously, we denote by  $\widetilde{H}$ and  $V$  two  Hilbert spaces such that 
$V$ is continuously and densely embedded into $\widetilde{H}$.
Let 
$\gota \colon  V \times V \to \Ci$ be a quasi-coercive and continuous sesquilinear form. 
We denote by $\widetilde{A}$ the corresponding operator and 
$\widetilde S = (\widetilde S(t))_{t\ge0}$ the semigroup generated by $-\widetilde{A}$ on $\widetilde{H}$. 

Let $\widetilde{\mathcal C}$ be a non-empty closed convex subset of $\widetilde{H}$ and  
$\widetilde{P} \colon  \widetilde{H} \to \widetilde{\mathcal C}$ the corresponding projection.
We recall the following invariance criterion (see \cite{Ouh95} or \cite{Ouh5} Theorem~2.2).

\begin{thm} \label{invC-Ou}
The following conditions are equivalent.
\begin{tabeleq}
\item \label{invC-Ou-1}
The semigroup $\widetilde S$ leaves invariant $\widetilde{\mathcal C}$, that is, 
$\widetilde S(t) \widetilde{\mathcal C} \subset \widetilde{\mathcal C}$ for all $t \ge 0$.
\item \label{invC-Ou-2}
$\widetilde{P}V \subset V$ and $\RRe \gota(\widetilde{P} u, u- \widetilde{P} u) \ge 0$ for all $u \in V$. 
\end{tabeleq}
If $\gota$ is accretive, then the previous conditions are equivalent to 
\begin{tabeleq}
\setcounter{tellerr}{2}
\item \label{invC-Ou-3}
$\widetilde{P}V \subset V$ and $\RRe \gota(u, u- \widetilde{P} u) \ge 0$ for all $u \in V$.
\end{tabeleq}
\end{thm}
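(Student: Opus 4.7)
The plan is to prove the two implications (ii) $\Rightarrow$ (i) and (i) $\Rightarrow$ (ii), and then to handle (iii) under the accretivity hypothesis at the end. The easy direction is (ii) $\Rightarrow$ (i), a Gronwall argument along the semigroup orbit; the main obstacle is (i) $\Rightarrow$ (ii), which requires manufacturing $V$-regularity of $\widetilde{P} u$ out of mere $\widetilde{H}$-level semigroup invariance of $\widetilde{\mathcal C}$.

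For (ii) $\Rightarrow$ (i), fix $f \in \widetilde{\mathcal C}$ and set $u(t) = \widetilde{S}(t) f$. By holomorphy of the semigroup, $u(t) \in D(\widetilde{A}) \subset V$ for $t > 0$, and by hypothesis $\widetilde{P} u(t) \in V$ as well. Set $\varphi(t) := \| u(t) - \widetilde{P} u(t) \|_{\widetilde{H}}^{2}$. The projection property $\| y - \widetilde{P} y \|^{2} \le \| y - \widetilde{P} x \|^{2}$ applied at $y = u(t+h)$, $x = u(t)$ yields the forward Dini bound
\[
D^{+} \varphi(t) \;\le\; 2 \RRe \bigl( u'(t), u(t) - \widetilde{P} u(t) \bigr)_{\widetilde{H}} \;=\; -2 \RRe \gota\bigl( u(t), u(t) - \widetilde{P} u(t) \bigr),
\]
using $u'(t) = -\widetilde{A} u(t)$ and the identity $(\widetilde{A} w, z)_{\widetilde{H}} = \gota(w,z)$ for $w \in D(\widetilde{A})$ and $z \in V$. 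Splitting $\gota(u, u - \widetilde{P} u) = \gota(\widetilde{P} u, u - \widetilde{P} u) + \gota(u - \widetilde{P} u, u - \widetilde{P} u)$, the real part of the first summand is nonnegative by (ii), while the quasi-coercivity estimate $\RRe \gota(w,w) \ge \mu \|w\|_{V}^{2} - \omega \|w\|_{\widetilde{H}}^{2}$ bounds the second below by $-\omega \varphi(t)$. Hence $D^{+} \varphi \le 2\omega \varphi$, and since $\widetilde{P} f = f$ gives $\varphi(0) = 0$, Gronwall forces $\varphi \equiv 0$; thus $u(t) \in \widetilde{\mathcal C}$.

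For (i) $\Rightarrow$ (ii), I would first convert semigroup invariance into resolvent invariance via the Laplace representation
\[
(\lambda + \widetilde{A})^{-1} f = \int_{0}^{\infty} e^{-\lambda t} \widetilde{S}(t) f \, dt,
\]
valid for $\lambda$ beyond the type of $\widetilde{S}$. This realises $J_{\lambda} f := \lambda (\lambda + \widetilde{A})^{-1} f$ as a normalised convex average of elements of $\widetilde{\mathcal C}$ whenever $f \in \widetilde{\mathcal C}$, so $J_{\lambda} \widetilde{\mathcal C} \subset \widetilde{\mathcal C}$. The $V$-regularity of $\widetilde{P} u$ for $u \in V$ is then produced by setting up, for each $\lambda > \omega$, a penalised variational inequality of the form
\[
\RRe \gota(v_{\lambda}, w - v_{\lambda}) + \lambda \RRe ( v_{\lambda} - u, w - v_{\lambda})_{\widetilde{H}} \ge 0
\quad \mbox{for all } w \in \widetilde{\mathcal C} \cap V,
\]
uniquely solved by Stampacchia's theorem with $v_{\lambda} \in \widetilde{\mathcal C} \cap V$. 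Quasi-coercivity combined with testing against a distinguished element of $\widetilde{\mathcal C} \cap V$ delivers a uniform $V$-bound on $v_{\lambda}$, and weak $V$-compactness extracts a limit along a subsequence; identifying this limit with $\widetilde{P} u$ (using the penalty term and the invariance $J_{\lambda} \widetilde{\mathcal C} \subset \widetilde{\mathcal C}$ to get $\widetilde{H}$-convergence $v_{\lambda} \to \widetilde{P} u$) yields $\widetilde{P} u \in V$. The inequality $\RRe \gota(\widetilde{P} u, u - \widetilde{P} u) \ge 0$ then falls out by testing the variational inequality with directions $w = \widetilde{P} u + s(\phi - \widetilde{P} u)$ as $s \downarrow 0$, after passing to the $\lambda \to \infty$ limit.

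Under the additional accretivity assumption, (ii) $\Leftrightarrow$ (iii) is almost trivial: expanding $\gota(u, u - \widetilde{P} u) = \gota(\widetilde{P} u, u - \widetilde{P} u) + \gota(u - \widetilde{P} u, u - \widetilde{P} u)$, accretivity makes the real part of the last summand nonnegative, so (ii) implies (iii) at once. Conversely, (iii) itself implies (i) via the Gronwall argument of the second paragraph (now with $D^{+} \varphi \le -2 \RRe \gota(u(t), u(t) - \widetilde{P} u(t)) \le 0$ directly, no splitting needed), and then (i) $\Rightarrow$ (ii) closes the circle. The \emph{principal obstacle} is the $V$-regularity step in the proof of (i) $\Rightarrow$ (ii): the naive approximants $J_{\lambda} u$ and $J_{\lambda}(\widetilde{P} u)$ generally fail to be $V$-bounded, so the penalised variational scheme on $\widetilde{\mathcal C} \cap V$ is essential to extract a $V$-controlled sequence converging to $\widetilde{P} u$.
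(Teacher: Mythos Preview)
The paper does not give its own proof of this theorem: it simply recalls the result and refers to \cite{Ouh95}, \cite{Ouh5} Theorem~2.2, and \cite{ADO} Theorem~2.2. So the benchmark to compare against is the standard Ouhabaz argument in those references.

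Your treatment of (ii)$\Rightarrow$(i) via the Dini--Gronwall estimate on $\varphi(t)=\|u(t)-\widetilde P u(t)\|_{\widetilde H}^2$ is correct and is essentially the argument in \cite{ADO}. Your handling of (ii)$\Leftrightarrow$(iii) under accretivity is also fine.

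The gap is in (i)$\Rightarrow$(ii). Your penalised Stampacchia scheme does produce a $v_\lambda\in\widetilde{\mathcal C}\cap V$, but the claimed uniform $V$-bound does \emph{not} follow from ``testing against a distinguished element $w_0\in\widetilde{\mathcal C}\cap V$'': doing so leaves a term $\lambda\,\RRe(v_\lambda-u,\,w_0-v_\lambda)$ of order $\lambda$ that you cannot absorb unless $w_0$ approximates $\widetilde P u$ in $\widetilde H$ at rate $O(1/\lambda)$, which is unavailable since $\widetilde P u$ is not yet known to lie in $V$ (let alone $D(\widetilde A)$). The link you assert between the resolvent invariance $J_\lambda\widetilde{\mathcal C}\subset\widetilde{\mathcal C}$ and the $\widetilde H$-convergence $v_\lambda\to\widetilde P u$ is also left unexplained; your $v_\lambda$ is defined through the variational inequality, not through $J_\lambda$.

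More importantly, your stated reason for introducing the penalised scheme is wrong. The ``naive'' approximants $z_\lambda:=J_\lambda(\widetilde P u)$ \emph{are} $V$-bounded, and this is exactly the argument in \cite{Ouh95,Ouh5}. One tests the resolvent identity $\gota(z_\lambda,\phi)=\lambda(\widetilde P u - z_\lambda,\phi)_{\widetilde H}$ with $\phi=z_\lambda-u\in V$ and uses both $z_\lambda\in\widetilde{\mathcal C}$ (from resolvent invariance) and the projection inequality $\RRe(u-\widetilde P u,\,z_\lambda-\widetilde P u)_{\widetilde H}\le 0$ to obtain
\[
\RRe\gota(z_\lambda,z_\lambda-u)=-\lambda\|z_\lambda-\widetilde P u\|_{\widetilde H}^2+\lambda\,\RRe(\widetilde P u - z_\lambda,\widetilde P u - u)_{\widetilde H}\le 0.
\]
Combined with quasi-coercivity and the uniform $\widetilde H$-bound on $J_\lambda$, this gives $\sup_\lambda\|z_\lambda\|_V<\infty$; weak $V$-compactness and $z_\lambda\to\widetilde P u$ in $\widetilde H$ then yield $\widetilde P u\in V$, and weak lower semicontinuity of $v\mapsto\RRe\gota(v,v)+\omega\|v\|_{\widetilde H}^2$ passes the inequality $\RRe\gota(z_\lambda,z_\lambda-u)\le 0$ to the limit. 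No accretivity is used here, consistent with the remark following the theorem in the paper. You should replace your variational-inequality detour by this direct argument.
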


The theorem is stated and proved in \cite{Ouh95} or \cite{Ouh5} for accretive forms but the proof given there for the equivalence of \ref{invC-Ou-1} and 
\ref{invC-Ou-2} does not use accretivity. On the other hand, the implication \ref{invC-Ou-2} $\Rightarrow$ \ref{invC-Ou-1} is proved in \cite{ADO} Theorem~2.2
in a general setting of non-autonomous quasi-coercive forms with a non-homogeneous term.  

\medskip
Let now $H$ be a Hilbert space and $j \colon V \to H$ a bounded linear map with dense range.
We assume that $\gota$ is $j$-elliptic and denote by $A$ the operator associated with $(\gota, j)$.
 The semigroup generated by $-A$ on $H$ is denoted by 
$S = (S(t))_{t\ge0}$.

We consider a non-empty closed convex set ${\mathcal C}$ of $H$ and denote by $P \colon  H \to {\mathcal C}$ the projection.
 In the context of $j$-elliptic forms, the previous theorem has the 
following reformulation (see  \cite{AE2}, Proposition~2.9). 

\begin{prop}\label{invC-AE} 
Suppose that $\gota$ is accretive and $j$-elliptic.
Then the following conditions are equivalent. 
\begin{tabeleq}
\item 
$\mathcal C$ is invariant for $S$.
\item 
For all $u \in V$ there exists a $w \in V$ such that $P(j(u))= j(w)$ and $\RRe \gota(w, u-w) \ge 0$.
\item 
For all $u \in V$ there exists a $w \in V$ such that $P(j(u))= j(w)$ and $\RRe \gota(u, u-w) \ge 0$.
\end{tabeleq}
\end{prop}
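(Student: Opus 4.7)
The plan is to deduce the statement from the classical invariance criterion Theorem~\ref{invC-Ou} via a resolvent approach. First observe the algebraic identity
\[
\RRe \gota(u, u-w) = \RRe \gota(w, u-w) + \RRe \gota(u-w, u-w),
\]
which, combined with accretivity of $\gota$, gives (ii)$\Rightarrow$(iii) for the same $w$; hence it is enough to prove (i)$\Leftrightarrow$(ii) and (iii)$\Rightarrow$(i).

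For the easy directions (ii)$\Rightarrow$(i) and (iii)$\Rightarrow$(i), I would use the exponential formula $S(t) = \lim_{n\to\infty}(I+\tfrac{t}{n}A)^{-n}$ together with closedness of $\mathcal{C}$ to reduce matters to showing $(I+sA)^{-1}\mathcal{C} \subset \mathcal{C}$ for small $s > 0$. Given $\psi \in \mathcal{C}$, set $\varphi := (I+sA)^{-1}\psi \in D(A)$ and pick $u \in V$ with $j(u) = \varphi$; the resolvent identity then reads
\[
\gota(u,v) = \tfrac{1}{s}(\psi - j(u), j(v))_H \qquad (v \in V).
\]
The hypothesis produces $w \in V$ with $j(w) = P(\varphi)$. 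Testing with $v = u-w$ and splitting $\psi - j(u) = (\psi - P(\varphi)) - (\varphi - P(\varphi))$, the projection inequality $\RRe(\psi - P(\varphi), \varphi - P(\varphi))_H \le 0$ (valid since $\psi \in \mathcal{C}$) yields
\[
s\,\RRe \gota(u, u-w) \le -\|j(u) - j(w)\|_H^2.
\]
Combined with the hypothesis (either (ii) together with accretivity, or (iii) directly), this forces $j(u) = j(w) = P(\varphi)$, whence $\varphi \in \mathcal{C}$.

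The hard direction is (i)$\Rightarrow$(ii), in which one must produce, for each $u \in V$, an element $w \in V$ with $j(w) = P(j(u))$. The plan is a resolvent approximation in $V$. Set $\psi := P(j(u)) \in \mathcal{C}$; invariance of $\mathcal{C}$ transfers to the resolvents, so $\psi_\lambda := \lambda(\lambda+A)^{-1}\psi \in \mathcal{C} \cap D(A) \subset j(V)$ for every $\lambda > 0$. Lift $\psi_\lambda = j(w_\lambda)$ with $w_\lambda \in V$, giving
\[
\gota(w_\lambda, v) = \lambda(\psi - \psi_\lambda, j(v))_H \qquad (v \in V). \qquad (*)
\]
The main obstacle is a uniform $V$-bound on $(w_\lambda)$: since $\psi$ need not lie in $D(A)$, $\lambda(\psi - \psi_\lambda)$ can diverge in~$H$. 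The resolution is to exploit the geometric role of $\psi$ as the projection of $j(u)$ onto $\mathcal{C}$. Indeed, the identity
\[
\lambda\,\RRe(\psi - \psi_\lambda, \psi_\lambda)_H = \RRe \gota(w_\lambda, u) - \lambda\,\RRe(\psi - \psi_\lambda, j(u) - \psi_\lambda)_H,
\]
together with $\RRe(\psi - \psi_\lambda, j(u) - \psi_\lambda)_H \ge \|\psi - \psi_\lambda\|_H^2 \ge 0$ (from the projection inequality $\RRe(\psi_\lambda - \psi, j(u)-\psi)_H \le 0$), combined with $j$-ellipticity gives
\[
\mu \|w_\lambda\|_V^2 \le \RRe\gota(w_\lambda, u) + \omega\|\psi_\lambda\|_H^2 \le M\|u\|_V\|w_\lambda\|_V + C,
\]
so $(w_\lambda)$ is bounded in $V$. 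Any weak $V$-cluster point $w$ then satisfies $j(w) = \psi = P(j(u))$ by weak continuity of $j$; and testing $(*)$ with $v = u-w_\lambda$ yields $\RRe\gota(w_\lambda, u-w_\lambda) \ge 0$ for every $\lambda$. Passing to the $\limsup$ as $\lambda \to \infty$ and using weak lower semicontinuity of the quadratic form $v \mapsto \RRe\gota(v,v)$ (a consequence of $j$-ellipticity) gives $\RRe\gota(w, u-w) \ge 0$, i.e.\ (ii).
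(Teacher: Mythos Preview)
Your argument is correct. Each of the three implications checks out: the easy directions (ii)$\Rightarrow$(iii), (ii)$\Rightarrow$(i), (iii)$\Rightarrow$(i) are routine, and in the delicate direction (i)$\Rightarrow$(ii) your resolvent family $w_\lambda$ is indeed bounded in~$V$ thanks to the projection inequality $\RRe(\psi_\lambda-\psi,\,j(u)-\psi)_H\le 0$, so a weak cluster point $w$ exists, has $j(w)=\psi$, and the limiting inequality follows. One small remark: the weak lower semicontinuity of $v\mapsto\RRe\gota(v,v)$ that you invoke is really a consequence of \emph{accretivity} (a continuous nonnegative quadratic form is convex, hence weakly lower semicontinuous), not of $j$-ellipticity per se; alternatively, $j$-ellipticity together with the strong convergence $j(w_\lambda)=\psi_\lambda\to\psi$ in~$H$ would also suffice. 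Either way the conclusion stands.

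The paper itself does not prove this proposition but cites \cite{AE2}, Proposition~2.9. The method there (visible also in the proof of Proposition~\ref{invC-AE-New}) is structurally different from yours: one uses $j$-ellipticity to obtain the splitting $V=V(\gota)\oplus\ker j$, defines a \emph{classical} closed sectorial form $\gota_c$ on $j(V)\subset H$ by $\gota_c(j(u),j(v)):=\gota(u,v)$ for $u,v\in V(\gota)$, checks that $\gota_c$ has associated operator~$A$, and then invokes Theorem~\ref{invC-Ou} directly for~$\gota_c$. The translation between conditions on $\gota$ and on $\gota_c$ is then purely algebraic. That route is cleaner conceptually---it shows the $j$-elliptic picture is just a repackaging of the classical one---and it is this reduction that makes the extension to the merely $\ker j$-accretive case (Proposition~\ref{invC-AE-New}) transparent. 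Your resolvent approximation, by contrast, is self-contained and avoids introducing $V(\gota)$ and $\gota_c$ altogether; it is a genuinely different, more hands-on proof.
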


The following invariance criterion is implicit in \cite{AE2}.
It allows to obtain invariance of a closed convex set $\mathcal C$ in $H$ 
for the semigroup $S$ from  the invariance of a certain closed convex set 
$\widetilde{\mathcal C}$ for the semigroup $\widetilde S$ in $\widetilde{H}$.

\begin{prop}\label{invC} 
Assume $\gota$ is accretive and $j$-elliptic.
Suppose that the convex set $\widetilde{\mathcal C}$ is invariant for the semigroup $\widetilde S$ and that  
\begin{equation}\label{eq3.1}
P \circ j = j \circ \widetilde{P}.
\end{equation}
Then the convex set ${\mathcal C}$ is invariant for the  semigroup $S$.  
\end{prop}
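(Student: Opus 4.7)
The plan is to reduce matters to Proposition~\ref{invC-AE} by exhibiting, for each $u \in V$, an explicit candidate $w \in V$ that satisfies both $P(j(u)) = j(w)$ and $\RRe \gota(w, u-w) \geq 0$. The natural guess is
\[
w := \widetilde{P} u,
\]
that is, one first projects in the ambient Hilbert space $\widetilde{H}$ onto $\widetilde{\mathcal C}$ and only then transports the result through~$j$.

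To verify this works, I would first apply Theorem~\ref{invC-Ou} to $\widetilde{\mathcal C}$ and $\widetilde S$. The standing assumption of Section~\ref{Taranaki} that $\gota$ is continuous and quasi-coercive on~$V$ is in force, and the invariance of $\widetilde{\mathcal C}$ under $\widetilde S$ is hypothesized, so condition~\ref{invC-Ou-2} of that theorem gives $\widetilde{P} V \subset V$ together with
\[
\RRe \gota(\widetilde{P} u, u - \widetilde{P} u) \geq 0 \quad \text{for all } u \in V.
\]
In particular, $w = \widetilde{P} u$ lies in $V$ and satisfies the required angle inequality on the nose. The intertwining hypothesis~\eqref{eq3.1} then gives
\[
j(w) = j(\widetilde{P} u) = P(j(u)),
\]
so $w$ is a lift in $V$ of the $H$-projection of $j(u)$ onto~$\mathcal C$.

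Combining these two facts with the accretivity and $j$-ellipticity of~$\gota$, condition~(ii) of Proposition~\ref{invC-AE} is satisfied, and hence $\mathcal C$ is invariant for~$S$. I do not foresee a genuine obstacle here: the argument is essentially a bookkeeping operation that glues the invariance criterion in~$\widetilde H$ (Theorem~\ref{invC-Ou}) to the $j$-elliptic invariance criterion in~$H$ (Proposition~\ref{invC-AE}) by means of the compatibility relation $P \circ j = j \circ \widetilde{P}$. The only subtle point worth flagging is to make sure that the hypotheses needed to invoke Theorem~\ref{invC-Ou}---in particular, quasi-coercivity of $\gota$ on $V$ relative to $\widetilde H$---are those in force in the statement, which indeed they are since accretivity is stronger than quasi-coercivity.
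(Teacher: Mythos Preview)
Your argument is correct and follows essentially the same route as the paper: set $w := \widetilde{P} u$, use Theorem~\ref{invC-Ou} to get $w \in V$ and the inequality $\RRe \gota(w, u-w) \geq 0$, use the intertwining relation~\eqref{eq3.1} to get $P(j(u)) = j(w)$, and conclude via Proposition~\ref{invC-AE}. One small correction to your closing remark: accretivity is \emph{not} stronger than quasi-coercivity (the zero form is accretive but not quasi-coercive unless $V = \widetilde H$); the reason quasi-coercivity holds is simply that it is a standing assumption at the start of Section~\ref{Taranaki}, as you yourself already noted earlier in the argument.
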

\begin{proof} First,  note that the  term in the right hand side of condition 
(\ref{eq3.1}) makes sense because of the fact that $\widetilde{P} V \subset V$ by 
Theorem~\ref{invC-Ou} and $j \colon  V \to H$. 

 Let now $u \in V$ and define $w = \widetilde{P}u$.
Then $w \in V$ and $Pj(u) = j(\widetilde{P} u) = j(w)$.
Moreover, 
\[
\RRe \gota(w, u-w) = \RRe \gota (\widetilde{P} u, u- \widetilde{P} u)  \ge 0
\]
 by Theorem~\ref{invC-Ou} and the assumption that $\widetilde{\mathcal C}$ is invariant for the semigroup $\widetilde S$.
We conclude by Proposition~\ref{invC-AE} that ${\mathcal C}$ is invariant for $S$.
 \end{proof}
 
There are interesting situations where one would like to relax  the  
accretivity assumption in the previous results.  
A typical situation is when one applies the above criteria to positivity of the Dirichlet-to Neumann semigroup. 
For example, if one considers the form given by (\ref{pos01}) with $a_0 = \lambda \in \Ri$, then the accretivity (on $W^{1,2}(\Omega)$) holds only if $\lambda \geq 0$. 
The accretivity on $W^{1,2}_0(\Omega)$, however,
holds if $\lambda \geq - \lambda_0$, where $\lambda_0$ is the first (positive) eigenvalue of the elliptic operator given in (\ref{eq-ellip01}) with Dirichlet boundary conditions. 
 It is then of interest to know whether one can replace accretivity in the previous results by accretivity on $W^{1,2}_0(\Omega)$. 
In the light of Theorem \ref{invC-Ou}, one would expect to have equivalence of (i) and (ii) in Proposition~\ref{invC-AE} in general.  
It turns out that this is true if the form $\gota$ is symmetric. We do not know whether the  same result holds in the case of non-symmetric forms.

Before stating the results we need some notation and assumptions. 
Set 
\[ 
V(\gota) = \{ u \in V : \gota(u,v) = 0  \mbox{ for all } v \in \ker j \}.
\]
In Example~\ref{xdia201} the space $V(\gota)$ coincides with the space of ${\mathcal A}$-harmonic functions. We assume that 
\begin{equation}\label{hyp-sum}
V = V(\gota) \oplus \ker j
\end{equation}
as vector spaces. 
In addition, we assume that there exist $\omega \in \Ri$ and $\mu > 0$ such that  
\begin{equation} \label{j-ell-V(a)}
 \RRe \gota(u,u) + \omega \, \|j(u) \|_H^2 \ge \mu \,  \| u \|_V^2
 \end{equation}
for all $u \in V(\gota)$.
(Loosely speaking, the $j$-ellipticity holds only on $V(\gota)$.)

Under these two assumptions, one can define as previously the operator $A$ associated with $(\gota, j)$ 
and $A$ is m-sectorial
(see \cite{AE2} Corollary 2.2). 
We denote again by $S$ the semigroup generated by $-A$ on $H$. The we have the following version of Proposition \ref{invC-AE}  in which we relax the accretivity assumption to be valid only
on $\ker j$. 
Note that we always assume that $j \colon V \to H$ is continuous and has dense range. 

\begin{prop} \label{invC-AE-New} 
Suppose that the form $\gota$ is symmetric and satisfies (\ref{hyp-sum}) and  (\ref{j-ell-V(a)}).  
Suppose in addition that 
\[
\gota(u,u) \ge 0 
\quad \mbox{for all } u \in \ker j
.  
\]
Then the following conditions are equivalent. 
\begin{tabeleq}
\item \label{invC-AE-New-1} 
$\mathcal C$ is invariant for $S$.
\item \label{invC-AE-New-2} 
For all $u \in V$ there exists a $w \in V$ such that $P j(u) = j(w)$ and $\RRe \gota(w, u-w) \ge 0$.
\end{tabeleq}
\end{prop}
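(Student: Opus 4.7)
The plan is to reduce the claim to the equivalence \ref{invC-Ou-1}$\Leftrightarrow$\ref{invC-Ou-2} of Theorem~\ref{invC-Ou}, which does not require accretivity. The first step will be to realise $A$ as the operator associated with a quasi-coercive closed form on a Hilbert space continuously and densely embedded in $H$. Under (\ref{hyp-sum}) the subspace $V(\gota)\subset V$ is closed (as the intersection of the kernels of the continuous functionals $u\mapsto \gota(u,v)$, $v\in\ker j$), and $j|_{V(\gota)}\colon V(\gota)\to j(V)$ is a continuous linear bijection. Transporting the $V$-norm via this bijection turns $V_*:=j(V(\gota))=j(V)$ into a Hilbert space continuously and densely embedded in $H$. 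I then define $\gotb(j(u),j(v)):=\gota(u,v)$ for $u,v\in V(\gota)$; this is a continuous sesquilinear form on $V_*$ which is quasi-coercive by (\ref{j-ell-V(a)}). Splitting an arbitrary test function $v\in V$ along (\ref{hyp-sum}) and using the defining property of $V(\gota)$ will show that the operator associated with $(\gotb,V_*,H)$ coincides with $A$, and hence generates the same semigroup $S$.

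With this reduction in hand, Theorem~\ref{invC-Ou} applied to $\gotb$ (no accretivity needed for \ref{invC-Ou-1}$\Leftrightarrow$\ref{invC-Ou-2}) yields that $\mathcal{C}$ is invariant for $S$ if and only if $P V_*\subset V_*$ and $\Re \gotb(P\varphi,\varphi-P\varphi)\ge 0$ for all $\varphi\in V_*$, which unwinds to the following restriction of \ref{invC-AE-New-2}: for every $u\in V(\gota)$ there exists $w\in V(\gota)$ with $j(w)=Pj(u)$ and $\Re\gota(w,u-w)\ge 0$. Thus the proof reduces to checking that this restricted condition is equivalent to the full condition \ref{invC-AE-New-2}. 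The direction ``restricted implies full'' is immediate from (\ref{hyp-sum}): for $u=u_h+u_0\in V(\gota)\oplus\ker j$, the element $w_h\in V(\gota)$ produced from $u_h$ satisfies $j(w_h)=Pj(u_h)=Pj(u)$, and $\gota(w_h,u-w_h)=\gota(w_h,u_h-w_h)+\gota(w_h,u_0)=\gota(w_h,u_h-w_h)$, because $w_h\in V(\gota)$ annihilates $u_0\in\ker j$.

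The converse is where the symmetry of $\gota$ and the non-negativity on $\ker j$ will be used. Given $u\in V(\gota)$ and the $w\in V$ supplied by \ref{invC-AE-New-2}, I decompose $w=w_h+w_0\in V(\gota)\oplus\ker j$; then $j(w_h)=j(w)=Pj(u)$. Since $w_h,u-w_h\in V(\gota)$, the defining property of $V(\gota)$ yields $\gota(w_h,w_0)=0$ and $\gota(u-w_h,w_0)=0$, and symmetry then gives $\gota(w_0,u-w_h)=\overline{\gota(u-w_h,w_0)}=0$. A bilinear expansion produces
\[
\gota(w,u-w)=\gota(w_h,u-w_h)-\gota(w_0,w_0),
\]
and since $\gota(w_0,w_0)$ is real by symmetry and non-negative by hypothesis, $\Re\gota(w_h,u-w_h)\ge\Re\gota(w,u-w)\ge 0$, so $w_h$ witnesses the restricted condition. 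The main technical point of the argument will be the identification of the operator associated with $\gotb$ as $A$, which is routine once (\ref{hyp-sum}) is exploited to reduce test functions to $V(\gota)$.
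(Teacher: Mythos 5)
Your proposal is correct and follows essentially the same route as the paper: both arguments pass to the transported form on $j(V(\gota))$, invoke the accretivity-free equivalence in Theorem~\ref{invC-Ou}, and use the direct sum (\ref{hyp-sum}) together with symmetry and nonnegativity on $\ker j$ to move witnesses between $V$ and $V(\gota)$ (your $u_h,w_h,w_0$ are the paper's $u',w',w-w'$). The only presentational difference is that you isolate the ``restricted condition'' and prove its equivalence with \ref{invC-AE-New-2} as a separate step, while the paper folds this into the two implications directly.
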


\begin{remark}
The implication \ref{invC-AE-New-1}$\Rightarrow$\ref{invC-AE-New-2}
remains valid without the symmetry assumption of the form $\gota$. 
\end{remark}

\begin{proof}[{\bf Proof of Proposition~\ref{invC-AE-New}}.]
Define  $\gota_c \colon j(V(\gota)) \times j(V(\gota)) \to \Ci$ by
\[
\gota_c(j(u),j(v)) := \gota(u,v)
\]
for all $u,v \in V(\gota)$.
We provide $j(V(\gota))$ with the norm carried over from $V(\gota)$ by $j$ using~(\ref{hyp-sum}).
It is easy to see that the form $\gota_c$ is well defined, continuous and quasi-coercive. 
Its associated operator is again $A$ (see \cite{AE2} Theorem 2.5 and  one can easily replace the $j$-ellipticity there by (\ref{j-ell-V(a)})). 
Now we can apply Theorem~\ref{invC-Ou} in which the equivalence of the first two assertions does not use accretivity.

`\ref{invC-AE-New-1}$\Rightarrow$\ref{invC-AE-New-2}'. 
By Theorem~\ref{invC-Ou} we have 
$P(j(V(\gota))) \subset j(V(\gota))$. 
Let  $u \in V$.
By (\ref{hyp-sum}) there exists a $u' \in V(\gota)$ such that $j(u) = j(u')$.
Hence there is a $w \in V(\gota)$ such that $P j(u') = j(w)$.
Then $P j(u) = P j(u') = j(w)$.
In addition, since $u-u' \in \ker j$ and $w \in V(\gota)$, we have 
\begin{eqnarray*}
\RRe \gota(w, u-w) &=& \RRe \gota(w, u-u') + \RRe \gota(w, u'-w)\\
&=& \RRe \gota(w, u'-w)\\
&=& \RRe \gota_c (j(w), j(u' - w))\\
&=&  \RRe \gota_c (P j(u'), j(u')- Pj(u'))\\
&\ge&  0,
\end{eqnarray*}
where we use again Theorem~\ref{invC-Ou} in the last step. 
This gives Condition~\ref{invC-AE-New-2}. 
We observe that the symmetry assumption is not used here.

`\ref{invC-AE-New-2}$\Rightarrow$\ref{invC-AE-New-1}'.
Let $\varphi := j(u) \in D(\gota_c)$, where $u \in V(\gota)$. 
By \ref{invC-AE-New-2} there exists a $w \in V$ such that $P j(u) = j(w)$ and 
$\RRe \gota(w, u-w) \geq 0$.
By (\ref{hyp-sum}) there is a $w' \in V(\gota)$ such that $j(w) = j(w')$.
Then $P\varphi = Pj(u) = j(w) = j(w') \in D(\gota_c)$.
Next 
\begin{eqnarray*}
\RRe \gota_c(P \varphi, \varphi - P \varphi) &=& \RRe \gota_c (j(w'), j(u) - j(w'))\\
&=& \RRe \gota(w', u-w')\\
&=& \RRe \gota(w'-w, u-w') + \RRe \gota(w, u-w')\\
&=& \RRe \gota(w, u-w').
\end{eqnarray*}
Here we use 
$$\RRe \gota(w'-w, u-w') = \RRe \gota(u-w', w'-w) = 0$$
by the symmetry of $\gota$ and the facts that $u-w' \in V(\gota)$ and $w'-w \in \ker j$. 
Now, by Condition~\ref{invC-AE-New-2} one deduces that 
\begin{eqnarray*}
\RRe \gota(w, u-w') 
&=& \RRe \gota(w, u-w) + \RRe \gota(w, w-w')\\
&\ge& \RRe \gota(w, w-w').
\end{eqnarray*}
On the other hand, $\RRe \gota(w', w-w') = 0$ since  $w' \in V(\gota)$ and $w-w' \in \ker j$.
Therefore 
\begin{eqnarray*}
\RRe \gota(w, w-w') &=&  \RRe \gota(w-w', w-w') + \RRe \gota(w', w-w')\\
&=& \RRe \gota(w-w', w-w') \\
&\ge& 0 ,
\end{eqnarray*}
where we use the accretivity assumption on $\ker j$. 
Hence we proved that
\[
\RRe \gota_c(P \varphi, \varphi - P \varphi) \ge 0
.  \]
Using again Theorem~\ref{invC-Ou}\ref{invC-Ou-2}$\Rightarrow$\ref{invC-Ou-1} 
we conclude that $\mathcal C$ is invariant for $S$.
\end{proof}

Now we have the following version of Proposition \ref{invC} with an identical proof, except that now we apply  Proposition \ref{invC-AE-New} instead of 
Proposition \ref{invC-AE}. 

\begin{cor}\label{invC-New} 
Assume that the form $\gota$ is symmetric and satisfies (\ref{hyp-sum}) and  (\ref{j-ell-V(a)}).  
Suppose in addition that 
$\gota(u,u) \ge 0$ for all $ u \in \ker j$
Suppose that the convex set $\widetilde{\mathcal C}$ is invariant for the semigroup $\widetilde S$ and that  
\[
P \circ j = j \circ \widetilde{P}.
\]
Then the convex set ${\mathcal C}$ is invariant for the  semigroup $S$.  
\end{cor}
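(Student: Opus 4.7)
The plan is to imitate the proof of Proposition~\ref{invC} line by line, replacing the final invocation of Proposition~\ref{invC-AE} by the new Proposition~\ref{invC-AE-New}. The only ingredient that needs to be verified is that the hypotheses of Proposition~\ref{invC-AE-New} (symmetry, the decomposition (\ref{hyp-sum}), the partial $j$-ellipticity (\ref{j-ell-V(a)}) and the accretivity on $\ker j$) are precisely those imposed in the corollary, so that Proposition~\ref{invC-AE-New} is at our disposal.

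The first step is to invoke Theorem~\ref{invC-Ou} in the direction \ref{invC-Ou-1}$\Rightarrow$\ref{invC-Ou-2}. Since $\widetilde{\mathcal C}$ is invariant for the semigroup $\widetilde S$ on $\widetilde H$ generated by $\gota$, we obtain that $\widetilde P V \subset V$ and $\RRe \gota(\widetilde P u, u - \widetilde P u) \ge 0$ for every $u \in V$. In particular, for any $u \in V$, the element $w := \widetilde P u$ lies in $V$, which is the setting needed to feed Proposition~\ref{invC-AE-New}.

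The second step is to translate the invariance from $\widetilde H$ to $H$ via the commutation hypothesis. Using $P \circ j = j \circ \widetilde P$, one has
\[
P j(u) = j(\widetilde P u) = j(w),
\]
so the candidate $w \in V$ satisfies $P j(u) = j(w)$. Combined with $\RRe \gota(w, u-w) = \RRe \gota(\widetilde P u, u - \widetilde P u) \ge 0$ from the previous step, this verifies condition~\ref{invC-AE-New-2} of Proposition~\ref{invC-AE-New} for every $u \in V$.

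Applying the implication \ref{invC-AE-New-2}$\Rightarrow$\ref{invC-AE-New-1} of Proposition~\ref{invC-AE-New} then yields the conclusion that $\mathcal C$ is invariant for $S$. There is no real obstacle here beyond that already overcome in the proofs of Proposition~\ref{invC} and Proposition~\ref{invC-AE-New}; the one subtlety is that the latter was proved under the symmetry assumption on $\gota$, which is why the corollary has to inherit this assumption, even though the proof of Proposition~\ref{invC} itself did not need symmetry.
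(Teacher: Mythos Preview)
Your proof is correct and follows exactly the approach indicated in the paper: the paper simply remarks that the corollary has ``an identical proof'' to Proposition~\ref{invC}, ``except that now we apply Proposition~\ref{invC-AE-New} instead of Proposition~\ref{invC-AE}.'' Your write-up spells out precisely those steps, with no deviation in strategy.
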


We conclude this section by mentioning that one may consider the 
Condition~\ref{invC-AE-New-2} in Theorem~\ref{invC-Ou}, Proposition~\ref{invC-AE} and 
Proposition~\ref{invC-AE-New} on a dense subset of $V$ as in 
\cite{Ouh5} Theorem~2.2.

\section{Positivity and $L_\infty$-contractivity}\label{pos}

The criteria in the previous section turn out to be simple and effective in applications.
We illustrate this by proving positivity and $L_\infty$-contractivity of the 
semigroup generated by the Dirichlet-to-Neumann operator $\cn$ described in 
Example~\ref{xdia201} of Section~\ref{Stratford} under a mild additional condition. 
This mild condition is that there is a $\mu > 0$ such that 
\begin{equation}\label{eq3.2}
\RRe \gota(u, u) 
\ge \mu \, \| \nabla u \|_{L_2(\Omega)}^2 
\end{equation}
for all $u \in W^{1,2}(\Omega)$.
This condition is valid if $\RRe a_0$ is large enough. 
It is a standard fact that there is a $\mu' > 0$ such that 
\[
\int_\Omega | \nabla u |^2 + \int_\Gamma | \Tr(u) |^2 
\ge \mu' \, \| u \|_{W^{1,2}(\Omega)}^2 
\]
for all $u \in W^{1,2}(\Omega)$.
 From this and (\ref{eq3.2}), it follows that $\gota$ is $j$-elliptic with $j = \Tr$.
Then we have the following consequence of Proposition~\ref{invC}.

\begin{cor}\label{cor3.4}
Suppose  (\ref{eq3.2}) and 
that the coefficients $c_{kl}, b_k, c_k$ and $a_0$ are all real-valued for all 
$k,l \in \{ 1,\ldots,d \} $.
Then the semigroup $S$ generated by (minus) the Dirichlet-to-Neumann operator $\cn$ is positive.
\end{cor}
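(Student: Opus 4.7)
I would apply Proposition~\ref{invC} with $\widetilde H = L_2(\Omega)$, $V = W^{1,2}(\Omega)$, $H = L_2(\Gamma)$ and $j = \Tr$, taking as convex sets the positive cones
\[
\widetilde{\mathcal C} = \{ u \in L_2(\Omega) : u \ge 0 \mbox{ a.e.} \},
\qquad
\mathcal C = \{ \varphi \in L_2(\Gamma) : \varphi \ge 0 \mbox{ a.e.} \},
\]
whose orthogonal projections are $\widetilde P u = (\RRe u)^+$ and $P\varphi = (\RRe \varphi)^+$. The $j$-ellipticity of $\gota$ is already recorded just before the statement, and (\ref{eq3.2}) gives $\RRe \gota(u,u)\ge \mu\|\nabla u\|_{L_2(\Omega)}^2\ge 0$, so $\gota$ is also accretive; thus the standing hypotheses of Proposition~\ref{invC} are satisfied.

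With these choices two points remain. First, the compatibility $P\circ j = j\circ\widetilde P$ on $V$, which reduces to $\Tr(v^+) = (\Tr v)^+$ for real $v\in W^{1,2}(\Omega)$. This follows from the fact that $v\mapsto v^+$ is continuous on $W^{1,2}(\Omega)$ and $\Tr$ is continuous, combined with the pointwise identity on $C^1(\overline{\Omega})$.

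Second, and this is the heart of the argument, the invariance of $\widetilde{\mathcal C}$ under $\widetilde S$. By Theorem~\ref{invC-Ou} this amounts to $\widetilde P V\subset V$ (standard, since $u\in W^{1,2}(\Omega)$ implies $(\RRe u)^+ \in W^{1,2}(\Omega)$) together with the inequality $\RRe \gota(\widetilde P u, u - \widetilde P u)\ge 0$ for every $u\in V$. Setting $v=\RRe u$ and $w=\IIm u$, one has $\widetilde P u = v^+$ and $u-\widetilde P u = -v^- + iw$. Because the coefficients $c_{kl}, b_k, c_k, a_0$ are real, the expansion of $\RRe \gota(v^+, -v^- + iw)$ collapses: the imaginary contributions involving $w$ drop out, and each remaining integrand contains a pointwise product of the form $v^+ v^-$ or $(\partial_l v^+)(\partial_k v^-)$. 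Both vanish almost everywhere, the first by definition and the second since $\nabla v^+$ and $\nabla v^-$ have disjoint essential supports on $\{v>0\}$ and $\{v<0\}$ respectively. Hence $\RRe \gota(\widetilde P u, u-\widetilde P u) = 0$, so Theorem~\ref{invC-Ou} gives the invariance of $\widetilde{\mathcal C}$, and Proposition~\ref{invC} then delivers invariance of $\mathcal C$ under $S$, which is exactly the positivity of the semigroup generated by $-\cn$.

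The only real obstacle is that last sign-chasing computation, whose success crucially depends on the reality of all coefficients: if any of $c_{kl}, b_k, c_k, a_0$ had non-trivial imaginary part, the expansion would couple $\nabla v^+$ to $\nabla w$ (or $v^+$ to $w$) and the argument would break down. Everything else is routine use of the $j$-elliptic form machinery from Section~\ref{Stratford}.
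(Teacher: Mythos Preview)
Your proof is correct and follows essentially the same route as the paper: choose the positive cones in $L_2(\Omega)$ and $L_2(\Gamma)$, check $P\circ\Tr=\Tr\circ\widetilde P$, and apply Proposition~\ref{invC}. The only difference is cosmetic---the paper invokes \cite{Ouh5} Theorem~4.2 for the positivity of $\widetilde S$, whereas you verify the form inequality $\RRe\gota(\widetilde P u,u-\widetilde P u)=0$ by hand, which is exactly the computation underlying that reference.
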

\begin{proof} 
It follows from \cite{Ouh5}, Theorem 4.2, that the semigroup $\widetilde S$ generated by $-A$ on $L_2(\Omega)$ is positive.
Therefore $\widetilde S$ leaves invariant the closed convex set 
$\widetilde{\mathcal C} := \{ u \in L_2(\Omega) : u \ge 0 \}$.
The projection onto $\widetilde{\mathcal C} $ is 
$\widetilde{P} u = (\RRe u)^+$.
Now we choose ${\mathcal C} := \{ \varphi  \in L_2(\Gamma) : \varphi \ge 0 \}$.
Then $P \varphi = (\RRe \varphi)^+$. 
It is clear that (\ref{eq3.1}) is satisfied and hence ${\mathcal C}$ is invariant for $S$ by Proposition~\ref{invC}.
This latter property means that $S$ is positive.
 \end{proof}

Regarding the positivity proved above, a remark is in order. 
We have assumed (\ref{eq3.2}) in order to ensure $j$-ellipticity and define the 
Dirichlet-to-Neumann operator using $(\gota, j)$ technique as explained in Section \ref{Stratford}. 
The condition (\ref{eq3.2}) is however not true for general (too negative) $a_0$.  
On the other hand, for general $a_0 \in L_\infty(\Omega)$ one can still define the 
Dirichlet-to-Neumann operator using the form (\ref{eq-b}) under the sole condition 
that the elliptic operator with Dirichlet boundary conditions is invertible on $
L_2(\Omega)$. 
If $\gota$ is symmetric, then we apply Corollary~\ref{invC-New} instead of 
Proposition~\ref{invC} and obtain that the Dirichlet-to-Neumann semigroup $S$ is 
positive if in addition the form $\gota$ is accretive on $W^{1,2}_0(\Omega)$. 
In particular, if $c_{kl} = c_{lk}$ and $b_k = c_k$ for all
$k,l \in \{ 1,\ldots,d \} $, then $S$ is positive as soon as $a_0 + \lambda_1^D > 0$
(that is $a_0 + \lambda_1^D \geq 0$ and 
not $a_0 + \lambda_1^D = 0$ almost everywhere),
where $\lambda_1^D$ is the first eigenvalue of the operator 
\[
- \sum_{k,l=1}^d \partial_l \left( c_{kl} \, \partial_k \right) 
   + \sum_{k=1}^d b_k \, \partial_k -\partial_k (c_k \cdot)
\]
subject to the Dirichlet boundary conditions. 
Note that if the condition $a_0 + \lambda_1^D > 0$ is not satisfied, 
the semigroup $S$ might not be positive. 
See  \cite{Dan}.

\medskip

Concerning the $L_\infty$-contractivity of the Dirichlet-to-Neumann semigroup $S$ we have the following result. 
\begin{cor}\label{cor3.5} 
Suppose in addition to (\ref{eq3.2})  that $\RRe a_0 \ge 0$. 
Suppose also that $c_{kl}, b_k$ and $i c_k$ are real-valued for all 
$k,l \in \{ 1,\ldots,d \} $.
Then the semigroup $S$ is $L_\infty$-contractive.
\end{cor}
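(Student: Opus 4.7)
The plan is to apply Proposition~\ref{invC} to transport the $L_\infty$-contractivity of the Neumann-type semigroup $\widetilde S$ on $L_2(\Omega)$ to the Dirichlet-to-Neumann semigroup $S$ on $L_2(\Gamma)$. Set
\[
\widetilde{\mathcal C} := \{u \in L_2(\Omega) : |u| \le 1 \text{ a.e.}\},
\quad
\mathcal C := \{\varphi \in L_2(\Gamma) : |\varphi| \le 1 \text{ a.e.}\},
\]
whose orthogonal projections $\widetilde P$ and $P$ are both induced by the pointwise Lipschitz map $F(z) := z/(1 \vee |z|)$, $z \in \Ci$. Note that $\gota$ is accretive on $W^{1,2}(\Omega)$: indeed, (\ref{eq3.2}) gives $\RRe \gota(u,u) \ge \mu\, \|\nabla u\|_{L_2(\Omega)}^2 \ge 0$ once combined with the hypothesis $\RRe a_0 \ge 0$ on the zero-order part. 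Together with the $j$-ellipticity established in the paragraph preceding Corollary~\ref{cor3.4}, this makes Proposition~\ref{invC} applicable, so it remains to verify its two hypotheses.

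First I would verify that $\widetilde{\mathcal C}$ is invariant under $\widetilde S$, i.e.\ that the semigroup generated by $-A$ on $L_2(\Omega)$ is $L_\infty$-contractive. By Theorem~\ref{invC-Ou}, this reduces to $\widetilde P\, W^{1,2}(\Omega) \subset W^{1,2}(\Omega)$, which follows from the Sobolev chain rule for the Lipschitz map $F$, and to the pointwise inequality
\[
\RRe \gota(\widetilde P u, u - \widetilde P u) \ge 0 \quad \text{for all } u \in W^{1,2}(\Omega).
\]
Since $u - \widetilde P u$ vanishes on $\{|u| \le 1\}$, the integrand is supported on $\{|u| > 1\}$, where $u - \widetilde P u = (|u|-1)\, \widetilde P u$ is a non-negative real multiple of $\widetilde P u$. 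The reality of the $c_{kl}$ then makes the second-order contribution non-negative (as in the classical Ouhabaz argument, see \cite{Ouh5} Chapter~4), the hypothesis $\RRe a_0 \ge 0$ takes care of the zero-order term, and the combination of $b_k \in \Ri$ and $c_k \in i\Ri$ is precisely what is needed to make the two first-order contributions $\int b_k (\partial_k \widetilde P u)\, \overline{u - \widetilde P u}$ and $\int c_k\, \widetilde P u\, \overline{\partial_k(u - \widetilde P u)}$ cancel (or be non-negative) in the real part.

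Second, I would check the intertwining $P \circ \Tr = \Tr \circ \widetilde P$. Both sides equal $F \circ \Tr u$: since $F$ is Lipschitz with $F(0) = 0$, the Nemytskii operator $u \mapsto F \circ u$ preserves $W^{1,2}(\Omega)$, and the trace operator on a Lipschitz domain commutes with such substitutions.

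With both hypotheses of Proposition~\ref{invC} in place, we conclude that $\mathcal C$ is invariant under $S$, which is exactly the $L_\infty$-contractivity of the Dirichlet-to-Neumann semigroup. The main obstacle is the first-order term in the inequality $\RRe \gota(\widetilde P u, u - \widetilde P u) \ge 0$: the asymmetric reality hypotheses $b_k \in \Ri$ and $c_k \in i\Ri$ are calibrated precisely so that the two drift contributions pair up with opposite signs and disappear (or give a non-negative leftover) upon taking real parts on the set $\{|u| > 1\}$.
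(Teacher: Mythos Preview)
Your proposal is correct and follows essentially the same route as the paper: set up the unit-ball convex sets in $L_2(\Omega)$ and $L_2(\Gamma)$, check the intertwining $P\circ\Tr=\Tr\circ\widetilde P$, and invoke Proposition~\ref{invC}. The only difference is that for the invariance of $\widetilde{\mathcal C}$ under $\widetilde S$ the paper simply cites \cite{Ouh5} Theorem~4.6, whereas you sketch the verification of $\RRe\gota(\widetilde P u,u-\widetilde P u)\ge 0$ directly; also, note that (\ref{eq3.2}) already gives accretivity on its own, without needing $\RRe a_0\ge 0$ --- that extra hypothesis is what makes the zero-order term in the $L_\infty$-contractivity criterion work, not accretivity.
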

\begin{proof} 
Under the assumptions of the corollary, the semigroup $\widetilde S$ is 
$L_\infty$-contractive by Theorem 4.6  in \cite{Ouh5}.
This means that $\widetilde S$ leaves invariant the closed convex set given by 
$\widetilde{\mathcal C} := \{ u \in L_2(\Omega) : | u |  \le 1 \}$.
The projection onto $\widetilde{\mathcal C} $ is 
$\widetilde{P} u = (1 \land |u |) \sgn u$.
We choose ${\mathcal C} := \{ \varphi  \in L_2(\Gamma) : | \varphi | \le 1 \}$.
Then $P \varphi = (1 \land |\varphi |) \sgn \varphi$.
Since $\Tr ( (1 \land |u |) \sgn u ) = (1 \land | \Tr u |) \sgn (\Tr u)$ the condition 
(\ref{eq3.1}) is satisfied and hence ${\mathcal C}$ is invariant for $S$ by Proposition~\ref{invC}.
This proves that $S$ is $L_\infty$-contractive.
\end{proof}

A consequence of the previous corollary is that the semigroup $S$ can be extended to 
a holomorphic semigroup on $L_p(\Gamma)$ for all $p \in (2, \infty)$. 
For all $p \in (1,2)$ one may argue by duality by applying the corollary to the adjoint operator.

\section{A domination criterion}\label{Whangamomona}

This section is devoted to a domination criterion for semigroups such as those generated by 
Dirichlet-to-Neumann operators.
Although one can find in the literature  several criteria for the domination in terms of 
sesquilinear  forms (see \cite{Ouh95} or Chapter 2 in \cite{Ouh5}) their application 
to Dirichlet-to-Neumann operators is difficult since one has to deal with 
harmonic lifting of functions such as $\varphi \sgn \psi$ with 
$\varphi, \psi \in H^{1/2}(\Gamma)$ such that $|\varphi| \le |\psi |$ 
(see Theorem~\ref{dom-Ou} below).
In contrast to general criteria in \cite{Ouh95} we shall focus on operators such as 
the Dirichlet-to-Neumann operator and make a link between the domination in $L_2(\Gamma)$ 
and the domination  in $L_2(\Omega)$.
In a sense, we obtain the domination in $L_2(\Gamma)$ for the semigroup generated by
(minus) the Dirichlet-to-Neumann operator from  the domination  in $L_2(\Omega)$ of the 
corresponding elliptic operator with Neumann boundary conditions. 

We start by fixing some notation.
Let $\widetilde{H} := L_2(\widetilde X, \tilde \nu)$ and $H = L_2(X, \nu)$, 
where $(\widetilde X, \tilde \nu)$ and $(X, \nu)$ are $\sigma$-finite measure spaces.
Let $U$ and $V$ be two Hilbert spaces which are densely and continuously 
embedded into $\widetilde{H}$.
We consider two sesquilinear forms
\[
\gota \colon U \times U \to \Ci
\quad \mbox{and} \quad
\gotb \colon V \times V \to \Ci
\]
which are continuous, accretive and quasi-coercive.
We denote by $\widetilde{A}$ and $\widetilde{B}$ their associated operators, respectively.
Let $j_1 \colon U \to H$ and $j_2 \colon V \to H$ be two bounded operators with dense ranges.
We assume that $\gota$ is $j_1$-elliptic and $\gotb$ is $j_2$-elliptic and denote by 
$A$ and $B$ the operators associated with 
$(\gota, j_1)$ and $(\gotb, j_2)$, respectively.
Finally, we denote by $\widetilde{S} = (\widetilde{S}(t))_{t\ge0}$ and 
$\widetilde{T} = (\widetilde{T}(t))_{t\ge0}$ the semigroups generated by 
$-\widetilde{A}$ and $-\widetilde{B}$ on $\widetilde{H}$ and $S = (S(t))_{t\ge0}$ and $T = (T(t))_{t\ge0}$ the semigroups generated by $-A$ and $-B$ on $H$, respectively.
Then under these assumptions we have transference of domination.

\begin{thm}\label{dom}
Adopt the above notation and assumptions.
Further suppose the following.
\begin{tabelR}
\item \label{dom-1}
$\widetilde{S}$ is dominated by $\widetilde{T}$, i.e., 
\[
| \widetilde{S}(t) f | \le \widetilde{T}(t) |f|
\]
for all $t \ge 0$ and $f \in \widetilde{H}$. 
\item \label{dom-2}
The maps $j_1$ and $j_2$ satisfy the four properties in Hypothesis~\ref{hypdom}.
\end{tabelR}
Then $S$ is dominated by $T$, i.e.,
\[
| S(t) \varphi | \le T(t) | \varphi |
\]
for all $t \ge 0$ and all $\varphi \in H$. 
\end{thm}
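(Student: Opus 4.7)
The natural strategy is to reinterpret the domination $|S(t)\varphi|\le T(t)|\varphi|$ as the invariance of the closed convex cone
\[
\mathcal{C} := \{(\varphi,\psi)\in H\oplus H : |\varphi|\le\psi\}
\]
under the product semigroup $(S(t)\oplus T(t))_{t\ge0}$ on $H\oplus H$. The equivalence between the domination statement and this invariance is elementary: if $\mathcal{C}$ is invariant and $\varphi\in H$, apply the semigroup to $(\varphi,|\varphi|)\in\mathcal{C}$; conversely, if $|S(t)\varphi|\le T(t)|\varphi|$ for all $\varphi$, then for $(\varphi,\psi)\in\mathcal{C}$ we have $|S(t)\varphi|\le T(t)|\varphi|\le T(t)\psi$ by positivity of $T(t)$ (which is built into \ref{dom-1}). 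The same reduction works in $\widetilde H\oplus\widetilde H$, so Hypothesis~\ref{dom-1} is exactly the invariance of $\widetilde{\mathcal{C}}:=\{(f,g)\in\widetilde H\oplus\widetilde H : |f|\le g\}$ under $\widetilde S\oplus\widetilde T$.

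The plan is then to apply Proposition~\ref{invC} to the product data. I would set $\widehat V := U\oplus V$ with the Hilbert sum norm and define
\[
\widehat{\gota}\bigl((u_1,u_2),(v_1,v_2)\bigr) := \gota(u_1,v_1) + \gotb(u_2,v_2).
\]
This form is continuous, accretive and quasi-coercive on $\widehat V$, which embeds continuously and densely into $\widetilde H\oplus\widetilde H$; its associated operator is $\widetilde A\oplus\widetilde B$ and the generated semigroup is $\widetilde S\oplus\widetilde T$. Setting $\widehat j:=j_1\oplus j_2\colon\widehat V\to H\oplus H$, the form $\widehat{\gota}$ is $\widehat j$-elliptic (by summing the individual $j_k$-ellipticity estimates), with associated operator $A\oplus B$ and semigroup $S\oplus T$ on $H\oplus H$. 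Thus Proposition~\ref{invC} applies verbatim to $(\widehat{\gota},\widehat j,\widetilde{\mathcal C},\mathcal C)$.

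The remaining hypothesis of Proposition~\ref{invC} is the intertwining identity
\[
P\circ\widehat j = \widehat j\circ\widetilde P,
\]
where $\widetilde P$ and $P$ are the orthogonal projections onto $\widetilde{\mathcal{C}}$ and $\mathcal{C}$ in $L_2\oplus L_2$. Both projections are pointwise, obtained by projecting $(\varphi(x),\psi(x))\in\Ri^2$ onto the planar cone $\{(a,b)\in\Ri^2:|a|\le b\}$; explicitly, on the region $\psi\ge-|\varphi|$ one gets
\[
P(\varphi,\psi) = \bigl(\tfrac12(\psi+|\varphi|)\wedge|\varphi|\cdot\sgn\varphi,\ \tfrac12(\psi+|\varphi|)\vee|\varphi|\bigr),
\]
and zero elsewhere. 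Hence $P$ is assembled from the pointwise operations $|\cdot|$, $\sgn$, $\vee$, $\wedge$ and affine combinations, and the identity $P\circ\widehat j=\widehat j\circ\widetilde P$ reduces to the statement that $j_1$ and $j_2$ intertwine these lattice operations in the appropriate sense; this is precisely what the four conditions of Hypothesis~\ref{hypdom} are designed to encode.

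The main obstacle, therefore, is the verification of this intertwining identity from Hypothesis~\ref{hypdom}. This is a careful but essentially algebraic calculation decomposing $H\oplus H$ into the three regions $|\varphi|\le\psi$, $\psi<-|\varphi|$, and the complementary cone, and checking on each piece that applying $\widetilde P$ before $\widehat j$ gives the same result as applying $\widehat j$ before $P$. Once this is in hand, Proposition~\ref{invC} yields that $\mathcal{C}$ is invariant for $S\oplus T$, which by the opening observation is exactly the domination $|S(t)\varphi|\le T(t)|\varphi|$.
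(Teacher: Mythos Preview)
Your approach is essentially identical to the paper's: pass to the product form $\widehat{\gota}=\gota\oplus\gotb$ on $U\oplus V$ with $\widehat j=j_1\oplus j_2$, reinterpret both domination statements as invariance of the cones $\widetilde{\mathcal C}$ and $\mathcal C$, and invoke Proposition~\ref{invC} once the intertwining $P\circ\widehat j=\widehat j\circ\widetilde P$ is checked from Hypothesis~\ref{hypdom}. One small correction: your explicit projection formula is off in the second component and omits $\RRe\psi$ in the complex case (test it on $(\varphi,\psi)=(0,2)$); the correct pointwise formula, as in the paper, is
\[
\widehat P(u,v)=\tfrac12\Bigl(\bigl[|u|+|u|\wedge\RRe v\bigr]^+\sgn u,\ \bigl[|u|\vee\RRe v+\RRe v\bigr]^+\Bigr).
\]
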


In the light of Proposition \ref{invC-AE-New} and Corollary \ref{invC-New} the accretivity assumption can be improved if 
 the forms $\gota$ and $\gotb$ are symmetric. We leave the details to the interested reader. 

For the proof of Theorem~\ref{dom} and also to formulate Hypothesis~\ref{hypdom}
we need some additional concepts.
The following definition was introduced in \cite{Ouh95}.

\begin{ddefinition} 
We say that $U$ is an {\bf ideal} of $V$ if 
\begin{itemize}
\item $u \in U  \Rightarrow | u | \in V$ and
\item if $u \in U$ and $v \in V$ are such that $|v| \le |u|$, then $v \sgn u \in U$. 
\end{itemize}
\end{ddefinition}

We also recall the following criterion for domination (see \cite{Ouh95} or \cite{Ouh5} Theorem~2.21).

\begin{thm}\label{dom-Ou} 
Suppose that the semigroup $\widetilde{T}$ is positive.
The following conditions are equivalent.
\begin{tabeleq}
\item
$\widetilde{S}$ is dominated by $\widetilde{T}$.
\item
$U$ is an ideal of $V$ and $\RRe \gota(u, |v| \sgn u) \ge \gotb(|u|, |v|)$ 
for all $(u,v) \in U \times V$ such that $|v| \le |u|$.
\item
$U$ is an ideal of $V$ and $\RRe \gota(u, v) \ge \gotb(|u|, |v|)$ 
for all $u,v \in U$ such that $u\, \overline{v} \ge 0$.
\end{tabeleq}
\end{thm}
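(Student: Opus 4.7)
The plan is to recast domination as an invariance property of a single closed convex set in $\widetilde H \oplus \widetilde H$ under a product semigroup, and then apply Theorem~\ref{invC-Ou}.

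\textbf{Step 1: Reformulation as invariance.}
First I would exploit the assumed positivity of $\widetilde T$ to show that condition (i) is equivalent to the invariance, under the product semigroup $(\widetilde S(t) \oplus \widetilde T(t))_{t \ge 0}$, of the closed convex set
\[
\widetilde{\mathcal C} := \{(f, g) \in \widetilde H \oplus \widetilde H : g \text{ is real-valued and } |f| \le g\}.
\]
Invariance implies domination via $g := |f|$; conversely, when domination holds and $|f| \le g$ with $g$ real, positivity of $\widetilde T$ gives $\widetilde T(t) |f| \le \widetilde T(t) g$, which combined with $|\widetilde S(t) f| \le \widetilde T(t) |f|$ delivers invariance. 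The product semigroup is generated on $\widetilde H \oplus \widetilde H$ by (minus) the operator associated with the form
\[
\gotc\bigl((u_1, v_1), (u_2, v_2)\bigr) := \gota(u_1, u_2) + \gotb(v_1, v_2)
\]
on the Hilbert space $U \oplus V$, and $\gotc$ inherits continuity, accretivity and quasi-coercivity from $\gota$ and $\gotb$.

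\textbf{Step 2: Explicit projection onto $\widetilde{\mathcal C}$.}
A pointwise calculation identifies the orthogonal projection $\widetilde P$ onto $\widetilde{\mathcal C}$. After first replacing $g$ by $\RRe g$ (the imaginary part is orthogonal to the target set), the projection of $(f, \RRe g)$ onto the cone $\{(z, t) \in \Ci \times \Ri : |z| \le t\}$ equals $(f, \RRe g)$ itself when $|f| \le \RRe g$, equals $\bigl(\tfrac{|f|+\RRe g}{2}\,\sgn f,\; \tfrac{|f|+\RRe g}{2}\bigr)$ when $|f| > \RRe g \ge -|f|$, and equals $(0, 0)$ when $\RRe g < -|f|$.

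\textbf{Step 3: Translating Theorem~\ref{invC-Ou}.}
Applying Theorem~\ref{invC-Ou} to $\gotc$, invariance of $\widetilde{\mathcal C}$ is equivalent to $\widetilde P(U \oplus V) \subset U \oplus V$ together with $\RRe \gotc\bigl((u,v), (u,v) - \widetilde P(u,v)\bigr) \ge 0$ for all $(u,v) \in U \oplus V$. The stability condition $\widetilde P(U \oplus V) \subset U \oplus V$ is equivalent to $U$ being an ideal of $V$: testing on pairs $(u, 0)$ with $u \in U$ puts $|u|/2$ into the second coordinate and forces $|u| \in V$; testing on pairs $(u, v)$ with $|v| \le |u|$ and $u \overline v \ge 0$ shows that $v \sgn u \in U$ is needed; conversely, the two ideal properties are exactly what keep $\widetilde P$ stable on $U \oplus V$.

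\textbf{Step 4: The form inequality.}
Splitting $\gotc\bigl((u,v), (u,v) - \widetilde P(u,v)\bigr)$ according to the three regions of Step~2, the nonzero contributions assemble, after straightforward algebra, into the inequality $\RRe \gota(u, |v| \sgn u) \ge \gotb(|u|, |v|)$ for all pairs $(u, v) \in U \times V$ with $|v| \le |u|$; this is condition (ii). For the equivalence (ii) $\Leftrightarrow$ (iii), I would deduce (iii) from (ii) by restricting to $v \in U$ with $u \overline v \ge 0$, which gives $|v| \sgn u = v$ on the support of $u$; and (ii) from (iii) by substituting $v' := |v| \sgn u$, which belongs to $U$ by the ideal property and satisfies $|v'| = |v|$ with $u \overline{v'} = |u|\,|v| \ge 0$. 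The main obstacle is the bookkeeping in Steps 3 and 4: one has to track the three regions of the projection simultaneously, verify that the ideal conditions precisely characterise when $\widetilde P$ preserves $U \oplus V$, and check that the contributions from $\RRe \gotc$ over these regions assemble into the stated form inequalities.
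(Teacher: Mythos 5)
The paper does not prove Theorem~\ref{dom-Ou}; it recalls it as known and cites \cite{Ouh95} and \cite{Ouh5} Theorem~2.21. So there is no in-paper proof to compare against. That said, your plan follows exactly the strategy of \cite{Ouh95}, and it is also the strategy the authors redeploy to prove Theorem~\ref{dom}: recast domination as invariance of $\widehat{\mathcal C}=\{(f,g):|f|\le g\}$ under the diagonal semigroup, compute the projection $\widehat P$, and translate Theorem~\ref{invC-Ou} into the two ideal and form conditions. Steps~1 and~2 are correct, and your case-by-case projection formula agrees with the closed form $\widehat P(u,v)=\tfrac12\bigl([|u|+|u|\wedge\RRe v]^+\sgn u,\;[|u|\vee\RRe v+\RRe v]^+\bigr)$ used in the paper's Section~\ref{Whangamomona}.

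The real content, however, lies in Steps~3 and~4, and there the argument is asserted rather than carried out. For Step~3 you should note that the projection only directly shows $w\sgn u\in U$ for real $0\le w\le|u|$, and that the full ideal condition (complex $v$, $|v|\le|u|$) then follows by decomposing $v$ into positive and negative real and imaginary parts and using the lattice operations on $V$; this is a standard reduction (cf.\ \cite{Ouh5} Prop.~2.20) but cannot be skipped silently. Step~4 is the genuine gap: ``straightforward algebra'' is not a proof. Writing out $\RRe\,\gotc\bigl((u,v),(u,v)-\widehat P(u,v)\bigr)$ on a generic pair produces mixed terms involving $\gota(u,u)$, $\gota(u,|v|\sgn u)$, $\gotb(|u|,|v|)$, $\gotb(|v|,|v|)$ with weights that do not collapse to condition~(ii) for a single choice of test pair; one has to choose the test element carefully (e.g.\ pairs of the form $(u,-|v|)$ with $|v|\le|u|$), or rescale and pass to a limit, and treat the three projection regions on the disjoint level sets simultaneously. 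The converse direction (that (ii) together with the ideal property forces the variational inequality of Theorem~\ref{invC-Ou} to hold for \emph{all} $(u,v)\in U\oplus V$) requires its own computation and is not addressed at all. Until these translations are written out, the proof is only a plausible plan, not a proof.
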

 
Since we assume in Theorem~\ref{dom} that $\widetilde{S}$ is dominated by $\widetilde{T}$,
it is then is a consequence of Theorem~\ref{dom-Ou} that $U$ is an ideal of  $V$.
In particular, all the quantities appearing in the  following properties   are well defined.

\begin{hyp} \label{hypdom}
Assume
\begin{itemize}
\item
$ j_2(\RRe v) = \RRe j_2(v)$ for all $v \in V$, 
\item
$ j_2 (v_1 \lor v_2) = j_2(v_1) \lor j_2(v_2)$ for all $v_1, v_2 \in V$ which are real-valued,
\item
$  j_2( |u|) = | j_1(u)|$ for all $u \in U$, and 
\item
$  j_1(|v| \sgn u) = |j_2(v)| \sgn(j_1(u))$ for all $(u,v) \in U\times V$ such that $|v| \le |u|$.
\end{itemize}
\end{hyp}

Note that the first two properties use the fact that semigroup $\widetilde{T}$ is 
positive and hence $\RRe u , (\RRe u )^+ \in V$ for all $u \in V$.
This implies that 
$v_1 \lor v_2 \in V$ for all real-valued $v_1, v_2 \in V$. 

\medskip

Obviously, the properties in Hypothesis~\ref{hypdom} are satisfied if 
$U = V = W^{1,2}(\Omega)$, $H = L_2(\Gamma)$ and $j_1 = j_2 = \Tr$. 

\begin{proof}[{\bf Proof of Theorem~\ref{dom}}] 
We follow an idea from \cite{Ouh95} and view the domination as the  
invariance of a closed convex set by an appropriate semigroup.
Define $\widehat{H} := \widetilde{H} \times  \widetilde{H} = L_2(\widetilde X, \mu) \times L_2(\widetilde X,\mu)$ 
and consider the closed convex set
\[
\widehat{\mathcal C} := \{ (u,v) \in \widehat{H} : |u| \le v \}.
\]
The projection onto $\widehat{\mathcal C}$ is given by 
\begin{equation}\label{eq4.4}
\widehat{P} (u,v) 
= \frac{1}{2} \left( \left[ |u| + |u|\land \RRe v \right]^+ \sgn u, 
       \left[ |u| \lor \RRe v + \RRe v \right]^+ \right).
\end{equation}
See \cite{Ouh95} or \cite{Ouh5} (2.7).
We also define $\hat{j} \colon U \times V \to H \times H$ by 
\[
\hat{j}(u,v) := (j_1(u), j_2(v)).
\]
Since $j_1$ and $j_2$ are bounded with dense ranges it is clear that $\hat{j}$ is 
bounded and has dense range. 

Next define the sesquilinear form $ \gotc \colon (U \times V) \times (U\times V) \to \Ci$ by
\[
\gotc((u_0,v_0), (u_1,v_1)) := \gota(u_0,u_1) + \gotb(v_0,v_1).
\]
This form is quasi-coercive, accretive and continuous.
Its associated operator is 
\[
\left( { \begin{array}{cc}
\widetilde{A} & 0 \\
0 & \widetilde{B}\\
\end{array} } \right)
\]
and the corresponding semigroup on $\widehat{H}$ is
\[ \left( { \begin{array}{cc}
\widetilde{S} & 0 \\
0 & \widetilde{T}\\
\end{array} } \right) = \left( { \begin{array}{cc}
\widetilde{S}(t) & 0 \\
0 & \widetilde{T}(t)\\
\end{array} } \right)_{t\ge0}.
\]
We next show that $\gotc$ is $\hat{j}$-elliptic.
Indeed, if $(u,v) \in U \times V$, then
\begin{eqnarray*}
\RRe \gotc((u,v), (u,v)) + \omega \, \| \hat{j} (u,v) \|_{H\times H}^2 
&= &\RRe \gota(u,u) + \omega \, \| j_1(u) \|_H^2 + \RRe \gotb(v,v) + \omega \, \| j_2(v) \|_H^2\\
&\ge & \mu \, ( \| u \|_U^2 + \| v\|_V^2),
\end{eqnarray*}
where we use that $\gota$ is $j_1$-elliptic and $\gotb$ is $j_2$-elliptic 
with some constants $\omega_1, \omega_2 \in \Ri$ and $\mu_1, \mu_2 > 0$ and then we take 
$\omega = \max(\omega_1, \omega_2)$ and $\mu = \min(\mu_1, \mu_2)$. 
Recall that $A$ and $B$ are the operators associated with 
$(\gota, j_1)$ and $(\gotb,j_2)$, respectively.
Denote by $C$ the operator associated with
$(\gotc, \hat{j})$.

We shall show that 
\begin{equation}\label{eq4.6}
C = \left( { \begin{array}{cc}
A & 0 \\
0 & B\\
\end{array} } \right). 
\end{equation}
In order to prove this we use the definition of the associated operator.
Let $(\varphi, \psi) \in D(C)$ and write $(\eta, \chi) = C (\varphi, \psi)$.
This means that there exists 
$(u,v) \in U \times V$ such that 
\begin{eqnarray}
\hat{j}(u,v) &=& (\varphi, \psi) \mbox{ and} \label{eq4.7}\\
\gotc((u,v) , (w,z) ) &=& ( (\eta, \chi), \hat{j}(w,z) )_{H \times H} \mbox{ for all } (w,z) \in U\times V.
\label{eq4.8}
\end{eqnarray}
The equality in (\ref{eq4.8}) reads as
\[
\gota(u,w) + \gotb(v,z) = (\eta, j_1(w))_H + (\chi, j_2(z))_H
\]
for all $w \in U$ and $z \in V$.
Taking $z= 0$ in the last equality and using (\ref{eq4.7}) yields
$\varphi = j_1(u)$ and $\gota(u,w) = (\eta, j_1(w))_H$ for all $w \in U$.
This means that  $\varphi \in D(A)$ and $A \varphi = \eta$.
Similarly, $\psi \in D(B)$ and $B \psi = \chi$. 
Hence
\[
(\varphi, \psi) \in D( \left( { \begin{array}{cc}
A & 0 \\
0 & B
\end{array} } \right) )
\quad \mbox{and} \quad  C(\varphi, \psi) = \left( { \begin{array}{cc}
A & 0 \\
0 & B
\end{array} } \right)(\varphi, \psi).
\]
We have proved that $\left( { \begin{array}{cc}
A & 0 \\
0 & B
\end{array} } \right)$ is an extension of $C$.
The converse inclusion is similar and we obtain (\ref{eq4.6}).

We conclude from the equality (\ref{eq4.6}) that the semigroup generated by 
$-C$ is given by 
\[ \left( { \begin{array}{cc}
S & 0 \\
0 & T
\end{array} } \right) = \left( { \begin{array}{cc}
S(t) & 0 \\
0 & T(t)
\end{array} } \right)_{t\ge0}.
\]
Now we consider the closed convex subset of $H\times H$ defined by
\[ 
{\mathcal C} := \{ (\varphi, \psi) \in H\times H : | \varphi| \le \psi \}.
\]
Similarly to (\ref{eq4.4}), the projection onto ${\mathcal C}$ is given by
\[
P (\varphi,\psi) 
= \frac{1}{2} \left( \left[ |\varphi| + |\varphi| \land \RRe \psi \right]^+ \sgn \varphi, \left[ |\varphi| \lor \RRe \psi + \RRe \psi \right]^+ \right).
\]
It follows easily from Hypothesis~\ref{hypdom} that $P\circ \hat{j} = \hat{j} \circ \widehat{P}$.
Since the domination of $\widetilde{S}$ by $\widetilde{T}$ means that the semigroup $\left( { \begin{array}{cc}
\widetilde{S} & 0 \\
0 & \widetilde{T}
\end{array} } \right)  
$ leaves invariant the convex $\widehat{\mathcal C}$, we conclude by Proposition~\ref{invC} that the semigroup 
$\left( { \begin{array}{cc}
S & 0 \\
0 & T
\end{array} } \right) $, generated by $-C$ on $H \times H$, leaves invariant the convex set ${\mathcal C}$.
The latter property means again that 
$S$ is dominated by $T$.
This proves the theorem.
\end{proof}

\section{The diamagnetic inequality}\label{Karangahake}

In this section we prove the diamagnetic inequality for the Dirichlet-to-Neumann operator.
This will be obtained by applying Theorem~\ref{dom}.

Let $\Omega$ be a bounded Lipschitz domain in $\Ri^d$ with  boundary $\Gamma$.
Let $\vec{a} = (a_1,\ldots, a_d)$ be such that $a_k \in L_\infty(\Omega, \Ri)$ 
for all $k \in \{ 1,\ldots, d \} $. 
We consider the  magnetic Dirichlet-to-Neumann operator  $\cn(\vec{a})$ on $L_2(\Gamma)$ and the 
Dirichlet-to-Neumann operator $\cn$ corresponding to $\vec{a} = 0$ 
(see  Examples~\ref{xdia201} and \ref{xdia202} in Section~\ref{Stratford}).
We denote by $S_{\vec{a}} = (S_{\vec{a}}(t))_{t\ge0}$ and $S = (S(t))_{t\ge0}$ 
the semigroups generated by $-\cn(\vec{a})$ and $-\cn$ on $L_2(\Gamma)$, respectively.
We have the following domination.

\begin{thm}\label{diam} 
Let $\Omega$ be a bounded Lipschitz domain in $\Ri^d$ with  boundary $\Gamma$.
Further let $c_{kl}, b_k, c_k, a_0 , a_k \in L_\infty(\Ri)$ 
for all $k,l \in \{ 1,\ldots,d \} $. 
Suppose that the ellipticity condition~(\ref{exdia201;5}) is valid.
Define the form $\gota$ as in (\ref{example1}).
Suppose in addition that the form $\gota$ in (\ref{example1}) is accretive and $j$-elliptic with $j= \Tr$.
Then $S_{\vec{a}}$ is dominated by $S$ on $L_2(\Gamma)$.
That is,
\[ 
| S_{\vec{a}}(t) \varphi | \le S(t) |\varphi|
\]
for all $t \ge 0$ and $\varphi \in L_2(\Gamma)$.
\end{thm}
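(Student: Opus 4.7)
The plan is to apply Theorem~\ref{dom} in the following configuration: take $\widetilde H = L_2(\Omega)$, $H = L_2(\Gamma)$, $U = V = W^{1,2}(\Omega)$, and $j_1 = j_2 = \Tr$, and let the role of ``$\gota$'' in Theorem~\ref{dom} be played by the magnetic form $\gota(\vec a)$ from (\ref{example2}) and the role of ``$\gotb$'' by the non-magnetic form $\gota$ from (\ref{example1}). With this identification, $\widetilde A = A(\vec a)$ and $\widetilde B = A$ are the magnetic and non-magnetic elliptic operators on $L_2(\Omega)$ (with formal Neumann-type boundary conditions), while $A = \cn(\vec a)$ and $B = \cn$ are exactly the operators associated with $(\gota(\vec a),\Tr)$ and $(\gota,\Tr)$. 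The conclusion of Theorem~\ref{dom} is then verbatim the asserted domination $|S_{\vec a}(t)\varphi|\le S(t)|\varphi|$.

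Three things must be checked. First, that both forms are continuous, accretive, quasi-coercive and $j$-elliptic. The non-magnetic case is assumed in the hypothesis of the theorem; the magnetic case follows because $\gota(\vec a)(u,v)-\gota(u,v)$ decomposes into lower-order terms bounded by $C\|u\|_{W^{1,2}}\|v\|_{L_2}+C\|u\|_{L_2}\|v\|_{W^{1,2}}+C\|u\|_{L_2}\|v\|_{L_2}$ (using only $\vec a\in L_\infty(\Omega,\Ri)^d$), and such contributions can be absorbed into the coercivity and accretivity constants via Young's inequality. Second, Hypothesis~\ref{hypdom} is trivially satisfied, as already noted immediately after its statement in this identical setting with $j_1=j_2=\Tr$.

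The substantive item is condition~\ref{dom-1} of Theorem~\ref{dom}: the classical diamagnetic inequality $|\widetilde S_{\vec a}(t)f|\le \widetilde S_0(t)|f|$ on $L_2(\Omega)$. This I would obtain by applying Theorem~\ref{dom-Ou} directly. The positivity of $\widetilde S_0$, needed to invoke Theorem~\ref{dom-Ou}, is the Neumann analogue of Corollary~\ref{cor3.4} and is guaranteed by the real-valuedness of $c_{kl},b_k,c_k,a_0$. The ideal property of $W^{1,2}(\Omega)$ over itself is standard. Finally, the key form inequality
\[
\RRe\,\gota(\vec a)(u,|v|\sgn u)\ \ge\ \gota(|u|,|v|),\qquad |v|\le |u|,
\]
reduces, after the first-order and zeroth-order terms (which share identical coefficients $b_k$, $c_k$, $a_0$ in both forms) match up via the standard chain-rule identities for $|u|$ and $|v|\sgn u$, to the pointwise magnetic inequality
\[
\sum_{k,l=1}^d c_{kl}(x)\,(D_l u)\,\overline{(D_k (|v|\sgn u))} \ \ge\ \sum_{k,l=1}^d c_{kl}(x)\,\partial_l|u|\,\partial_k|v|,
\]
which in turn follows from $|D_k u|^2\ge |\partial_k|u||^2$ a.e.\ on $\{u\neq 0\}$ (since $a_k$ is real) and the ellipticity (\ref{exdia201;5}). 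This is Simon's \cite{Sim82} bound in the form we need, and the precise verification in the present framework is along the lines of Theorem~4.31 of \cite{Ouh5}.

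The main obstacle, therefore, is not the transference from $\Omega$ to $\Gamma$, which is now automatic via Theorem~\ref{dom}, but the careful chain-rule manipulation in the underlying $L_2(\Omega)$-diamagnetic inequality. Once that is in place, Theorem~\ref{dom} delivers the conclusion without further work.
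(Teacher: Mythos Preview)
Your overall architecture is exactly the paper's: apply Theorem~\ref{dom} with $\widetilde H=L_2(\Omega)$, $H=L_2(\Gamma)$, $U=V=W^{1,2}(\Omega)$, $j_1=j_2=\Tr$, and reduce everything to the $L_2(\Omega)$ diamagnetic inequality, which the paper records as Proposition~\ref{diamOm} and proves via condition~(iii) of Theorem~\ref{dom-Ou} (the $u\,\overline v\ge 0$ version), whereas you sketch it via condition~(ii). These are equivalent, so no real difference there.

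There is, however, a genuine slip in your verification of the hypotheses of Theorem~\ref{dom} for the magnetic form. You argue that accretivity and $j$-ellipticity of $\gota(\vec a)$ follow from those of $\gota$ because the difference $\gota(\vec a)-\gota$ is lower order and ``can be absorbed via Young's inequality''. That reasoning only yields \emph{quasi}-accretivity, i.e.\ $\RRe\,\gota(\vec a)(u,u)\ge -C\|u\|_{L_2(\Omega)}^2$, not $\RRe\,\gota(\vec a)(u,u)\ge 0$; and for $j$-ellipticity the residual $\|u\|_{L_2(\Omega)}^2$ cannot be exchanged for $\|\Tr u\|_{L_2(\Gamma)}^2$. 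The clean fix is to reverse the order of your argument: first establish the form inequality $\RRe\,\gota(\vec a)(u,v)\ge \gota(|u|,|v|)$ for $u\overline v\ge 0$ (this is the computation in Proposition~\ref{diamOm}); then specialise to $v=u$ to get $\RRe\,\gota(\vec a)(u,u)\ge \gota(|u|,|u|)\ge 0$, which is accretivity. The paper's own short proof of Theorem~\ref{diam} does not spell this out either, but the needed inequality is precisely what it proves in Proposition~\ref{diamOm}. A second minor point: the pointwise reduction you cite, $|D_k u|^2\ge|\partial_k|u||^2$, suffices only when $(c_{kl})$ is diagonal; for general real elliptic $(c_{kl})$ one needs the quadratic-form argument with Cauchy--Schwarz that the paper carries out (the quantity $Q$ in the proof of Proposition~\ref{diamOm}), which is indeed what the reference to \cite{Ouh5} would supply.
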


\begin{proof}
We apply Theorem~\ref{dom} with  $\widetilde{H} = L_2(\Omega)$, 
$U = V = W^{1,2}(\Omega)$ and $ H = L_2(\Gamma)$.
Set $j_1=j_2 = \Tr $.
It is clear that the four properties in Hypothesis~\ref{hypdom} are satisfied.
Therefore Theorem~\ref{diam} follows immediately from Theorem~\ref{dom} and 
the next result, Proposition~\ref{diamOm}, on the domination in $L_2(\Omega)$.
\end{proof}

Denote by ${\widetilde A}(\vec{a})$ and $\widetilde{A} = \widetilde{A}(0)$ 
the elliptic operators associated with the forms defined by (\ref{example2}) 
and (\ref{example1}) on $W^{1,2}(\Omega)$.
We denote by $\widetilde{S_{\vec{a}}}$ and $\widetilde{S}$ the semigroups 
generated by $- \widetilde{A}(\vec{a})$ and $-\widetilde{A} $ on $L_2(\Omega)$, respectively.

\begin{prop}\label{diamOm} 
Suppose that $c_{kl}, b_k, c_k, a_0$ and $a_k$ 
are real-valued for all $k,l \in \{ 1,\ldots,d \} $. 
Then we have the diamagnetic inequality
\[ 
| \widetilde{S_{\vec{a}}}(t) f | \le \widetilde{S}(t) |f|
\]
for all $t \ge 0$ and $f \in L_2(\Omega)$.
\end{prop}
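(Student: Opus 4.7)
The plan is to reduce the diamagnetic inequality on $\Omega$ to the abstract form comparison criterion of Theorem~\ref{dom-Ou}, applied with $\widetilde H = L_2(\Omega)$, $U = V = W^{1,2}(\Omega)$, the ``smaller'' form being $\gota(\vec a)$ and the ``larger'' form being $\gota$. Three things must be verified: positivity of the dominating semigroup $\widetilde S$, the ideal property of $U$ inside $V$, and the form inequality $\RRe \gota(\vec a)(u,v) \ge \gota(|u|,|v|)$ whenever $u\bar v \ge 0$. The first two are essentially standard. Positivity of $\widetilde S$ is the classical real-coefficients criterion (see \cite{Ouh5} and the argument in Corollary~\ref{cor3.4}). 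The ideal property for $U=V=W^{1,2}(\Omega)$ amounts to showing $|u|\in W^{1,2}(\Omega)$ and $v\,\sgn u \in W^{1,2}(\Omega)$ whenever $|v|\le |u|$ in $W^{1,2}(\Omega)$; this is a standard Kato-type truncation obtained from the bounded multiplier $u/\sqrt{|u|^2+\varepsilon}$ by letting $\varepsilon \downarrow 0$.

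The heart of the proof is the form inequality, and my approach is polar decomposition. Where $u\neq 0$, write $u = |u|\phi_u$ with $|\phi_u|=1$; the hypothesis $u\bar v\ge 0$ forces $\phi_v = \phi_u$ on $\{uv\neq 0\}$. Formally (with rigorous justification postponed to the regularised version) one finds
\[
D_k u = \phi_u\bigl(\partial_k |u| + i|u|\,\xi_k\bigr), \qquad D_k v = \phi_u\bigl(\partial_k |v| + i|v|\,\xi_k\bigr),
\]
with a common real ``gauge'' $\xi_k$ (morally $\partial_k \theta - a_k$ where $\phi_u = e^{i\theta}$). Since all coefficients are real and $\phi_u\overline{\phi_u}=1$, taking real parts kills the cross imaginary terms and produces
\[
\RRe \sum_{k,l} c_{kl}(D_l u)\overline{(D_k v)}
= \sum_{k,l} c_{kl}(\partial_l|u|)(\partial_k|v|) \,+\, |u|\,|v|\sum_{k,l} c_{kl}\,\xi_l\xi_k,
\]
with the last sum pointwise non-negative by the ellipticity condition~(\ref{exdia201;5}) applied to the real vector $\xi$. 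The lower-order terms simplify in the same way: $\RRe b_k(D_k u)\bar v = b_k(\partial_k|u|)|v|$, $\RRe c_k u \overline{(D_k v)} = c_k|u|(\partial_k|v|)$, and $\RRe a_0 u\bar v = a_0|u||v|$, which exactly reproduce the first- and zero-order parts of $\gota(|u|,|v|)$. Summing gives $\RRe \gota(\vec a)(u,v) - \gota(|u|,|v|)\ge 0$.

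The main obstacle is the rigorous interpretation of the polar factor $\phi_u$ near $\{u=0\}$, since $\arg u$ is not a Sobolev function in general. I would avoid this by never invoking $\arg u$ directly: perform the calculation with the bounded multiplier $\phi_\varepsilon(u) := u/\sqrt{|u|^2 + \varepsilon}$, observe that $\RRe(\overline{\phi_\varepsilon(u)}\,D_k u) = \partial_k\sqrt{|u|^2+\varepsilon}$ and that its imaginary counterpart furnishes a real $L_2$ function $\eta_k^{\varepsilon}$, and then pass to the limit $\varepsilon \downarrow 0$ using Kato's inequality $|\nabla |u||\le |\nabla u|$ together with dominated convergence. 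This yields the form inequality in integrated form, and Theorem~\ref{dom-Ou} then delivers $|\widetilde S_{\vec a}(t)f| \le \widetilde S(t)|f|$, as required.
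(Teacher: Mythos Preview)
Your proposal is correct and follows essentially the same route as the paper: reduce to Theorem~\ref{dom-Ou} via positivity of $\widetilde S$, the (trivial) ideal property of $W^{1,2}(\Omega)$ in itself, and the form inequality $\RRe\gota(\vec a)(u,v)\ge\gota(|u|,|v|)$ for $u\bar v\ge 0$. The only difference is organizational: by absorbing $a_k$ into the phase variable you obtain the remainder $|u|\,|v|\sum_{k,l} c_{kl}\,\xi_l\xi_k\ge 0$ directly from ellipticity, whereas the paper keeps $\xi_k=\IIm\bigl((\partial_k u)\,\sgn\bar u\bigr)$ separate from $\vec a$ and closes with a Cauchy--Schwarz step on the symmetric part of $(c_{kl})$ --- your expression is precisely the completed square of the paper's quantity~$Q$.
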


The proposition is very well known in the case $\Omega = \Ri^d$,
$c_{kl} = \delta_{kl}$ and $b_k = c_k = 0$.
For general domains with Neumann boundary conditions (as we do in the previous proposition) 
and $c_{kl} = \delta_{kl}$, $b_k = c_k = 0$ it was proved in \cite{HS04}.
Note that in our case we do not assume any regularity nor symmetry for  $(c_{kl})$.
In addition we allow the presence of terms of order $1$.
The same domination result is also valid, with the same proof,  
if the operators $\widetilde{A}(\vec{a})$ and $\widetilde{A}$ are endowed with 
other boundary conditions such Dirichlet or mixed boundary conditions. 

\begin{proof}  
Note first that since all the coefficients are real-valued, the semigroup 
$\widetilde{S}$ generated by $-\widetilde{A}$ is positive 
(cf.\ \cite{Ouh5} Corollary~4.3).
In particular, $W^{1,2}(\Omega)$ is an ideal of itself (see  \cite{Ouh95} or \cite{Ouh5} Proposition~2.20).
It remains to prove that 
\begin{equation}\label{eq5.1}
\RRe \gota(\vec{a})(u,v) \ge \gota( |u|, |v|)
\end{equation} 
for all $u, v \in W^{1,2}(\Omega)$ with $u\, \overline{v} \ge 0$
and then apply Theorem~\ref{dom-Ou}. 
Let $u, v \in W^{1,2}(\Omega)$ with $u\, \overline{v} \ge 0$.
Then $u\, \overline{v} = |u| \, |v| $ and $ (\sgn  \overline u) \, \sgn v = 1$ outside the sets where 
$ u = 0 $ or $v=0$.
Hence using \cite{GT} Lemma~7.7 one deduces that 
\begin{eqnarray*}
\RRe \gota(\vec{a})(u,v) 
&=& \RRe \sum_{k,l=1}^d \int_\Omega c_{kl} \, (\partial_l u) \, \overline{\partial_k v} 
  + \sum_{k,l=1}^d \int_\Omega c_{kl} \, a_l \IIm(u\, \overline{\partial_k v}) 
  - \sum_{k,l=1}^d \int_\Omega c_{kl} \, a_k \IIm( (\partial_l u) \, \overline{ v}) \\*
&&{}+   \sum_{k,l=1}^d \int_\Omega c_{kl} \, a_l \, a_k \, |u| \, |v| 
   + \sum_{k=1}^d \int_\Omega b_k \RRe ((\partial_k u) \, \overline{v}) 
       + c_k \RRe(u\, \overline{\partial_k v}) \\*
&&{}    + \int_\Omega a_0 \, |u| \, |v| \\
& =&  \sum_{k,l=1}^d \int_\Omega c_{kl} \RRe((\partial_l u) \, \sgn \overline{u}) 
            \RRe ((\partial_k v) \, \sgn \overline{v}) \\*
 &&{} + \sum_{k,l=1}^d \int_\Omega c_{kl} \IIm((\partial_l u) \, \sgn \overline{u}) 
           \IIm ((\partial_k v) \, \sgn \overline{v})  \\*
&&{}+ \sum_{k,l=1}^d \int_\Omega c_{kl} \, a_l \IIm(u\, \overline{\partial_k v}) 
    - \sum_{k,l=1}^d \int_\Omega c_{kl} \, a_k \IIm( (\partial_l u) \, \overline{ v}) \\
 &&{}+  \sum_{k,l=1}^d \int_\Omega c_{kl} \, a_l \, a_k \, |u| \, |v| 
   + \sum_{k=1}^d \int_\Omega b_k \RRe ((\partial_k u)  \, \overline{v}) 
   +c_k \RRe(u\, \overline{\partial_k v})   \\*
& & {}   + \int_\Omega a_0 \, |u| \, |v|\\
&=& \sum_{k,l=1}^d \int_\Omega c_{kl} \, (\partial_l |u|) \, \partial_l |v| 
   + \sum_{k=1}^d \int_\Omega b_k \, (\partial_k |u|) \, |v| 
         +c_k \, |u| \,  \partial_k |v| 
   + \int_\Omega a_0 \, |u| \, |v|\\*
&& {} + \sum_{k,l=1}^d \int_\Omega c_{kl} \IIm((\partial_l u) \, \sgn \overline{u}) 
           \IIm ((\partial_k v) \, \sgn \overline{v}) \\*
&& {} - \sum_{k,l=1}^d\int_\Omega c_{kl} \, a_k \IIm ((\partial_l u) \, \sgn \overline{u}) \, |v| 
   - \sum_{k,l=1}^d \int_\Omega c_{kl} \, a_l \IIm ((\partial_k u) \, \sgn \overline{u}) \, |v|  \\*
&& {} + \sum_{k,l=1}^d \int_\Omega c_{kl} \, a_l \, a_k \, |u| \, |v|,
\end{eqnarray*}
where we used the standard fact that 
\[  
\partial_k |u| = \RRe ((\partial_k u ) \, \sgn \overline{u} ).
\]
In addition, since $u\, \overline{v}  \ge 0$ we have $\IIm \partial_k (u \, \overline{v}) = 0$ 
and hence we used that 
\[  
\IIm( u \, \overline{\partial_k v} )
= - |v| \IIm( (\partial_k u) \, \sgn \overline{u})
.  \]
Next $- |u| \IIm( (\partial_k v) \, \sgn \overline{v})
= \IIm( u \, \overline{\partial_k v} )$ and therefore 
\[
\int_\Omega c_{kl} \IIm((\partial_l u) \, \sgn \overline{u}) \IIm ((\partial_k v) \, \sgn \overline{v})
= \int_\Omega c_{kl} \IIm((\partial_l u) \, \sgn \overline{u}) \IIm ((\partial_k u) \, \sgn \overline{u}) 
         \, \frac{|v|}{|u|},
\]
with the convention that 
$\IIm((\partial_l u) \, \sgn \overline{u}) \IIm ((\partial_k u) \, \sgn \overline{u}) \, \frac{|v|}{|u|} = 0$ 
on the set where $u = 0$.

It follows that 
\begin{eqnarray*}
\RRe \gota(\vec{a})(u,v) 
&=& \gota(|u|, |v|) 
   + \sum_{k,l=1}^d \int_\Omega c_{kl} \IIm((\partial_l u) \, \sgn \overline{u}) 
          \IIm ((\partial_k u) \, \sgn \overline{u}) \, \frac{|v|}{|u|}\\*
&& {} - \sum_{k,l=1}^d \int_\Omega (c_{kl}+c_{lk}) \, a_k \IIm((\partial_l u) \, \sgn \overline{u}) \, |v| 
   +  \sum_{k,l} \int_\Omega c_{kl} \, a_l \, a_k \, |u| \, |v|\\
&=&  \gota(|u|, |v|)  + \int_\Omega Q \, \frac{|v|}{|u|},
\end{eqnarray*}
where 
\begin{eqnarray*}
Q 
&=& \sum_{k,l=1}^d  c_{kl} \IIm((\partial_l u) \, \sgn \overline{u}) 
           \IIm ((\partial_k u) \, \sgn \overline{u})   
- \sum_{k,l=1}^d  (c_{kl}+c_{lk}) \, a_k \IIm((\partial_l u) \, \sgn \overline{u}) \, |u|\\*
&& {} + \sum_{k,l=1}^d c_{kl} \, a_l \, a_k \, |u|^2.
\end{eqnarray*}
It remains to prove that $Q \ge 0$ to obtain (\ref{eq5.1}). 

Set $\xi_k := \IIm((\partial_k u) \, \sgn \overline{u})$ for all $k \in \{ 1,\ldots,d \} $, 
$\xi = (\xi_1,\ldots,\xi_d)$ and $C = (c_{kl})_{1 \le k,l \le d}$.
Then
\[ 
Q 
= \langle C \xi, \xi \rangle_{\Ri^d}  
   -  \langle (C+C^*) \vec{a}, \xi \rangle_{\Ri^d} \, |u| 
   +  \langle C \vec{a}, \vec{a}\rangle_{\Ri^d} \, |u|^2.
\]
By the Cauchy--Schwarz inequality,
\begin{eqnarray*}
 \langle (C+C^*) \vec{a}, \xi \rangle_{\Ri^d} \, |u| 
&\le&  \langle (C+C^*) \vec{a}, \vec{a} \rangle_{\Ri^d}^{1/2} \, |u| \: 
        \langle (C+C^*) \xi, \xi \rangle_{\Ri^d}^{1/2}\\
 &\le& \frac{1}{2} \langle (C+C^*) \vec{a}, \vec{a} \rangle_{\Ri^d} \,  |u|^2 
    + \frac{1}{2}  \langle (C+C^*) \xi, \xi \rangle_{\Ri^d}\\
 &=& \langle C \vec{a}, \vec{a}\rangle_{\Ri^d} \, |u|^2
   + \langle C \xi, \xi \rangle_{\Ri^d} .
 \end{eqnarray*}
 This implies that $Q \ge 0$ and finishes the proof of the proposition.
 \end{proof}

\begin{remark} 
We mentioned above that the diamagnetic inequality of Proposition~\ref{diamOm} 
is valid with other boundary conditions.
Note also that if we add a positive potential $V$ to $a_0$  in the expression of 
$\widetilde{A}(\vec{a})$, then we have the same domination by the semigroup of 
$\widetilde{A}$ (without $V$).
The same domination holds for the corresponding semigroups of the Dirichlet-to-Neumann operators.
A particular case of this result was proved in \cite{EO4} for the 
Dirichlet-to-Neumann operators associated with $-\Delta + V$ and $-\Delta$ on $L_2(\Gamma)$. 
\end{remark}

\section{Some consequences}\label{Muriwai}

Let $\Omega$ be  a bounded open Lipschitz subset of $\Ri^d$ with  boundary $\Gamma$, where $d \geq 2$.
Let $S_{\vec{a}}$ be the semigroup generated by 
(minus) the magnetic Dirichlet-to-Neumann operator $\cn(\vec{a})$ on $L_2(\Gamma)$.
Since the trace operator is compact, it follows that the spectrum of $\cn(\vec{a})$ is discrete.
If $\cn(\vec{a})$ is self-adjoint we denote by $\lambda_1 \le \lambda_2 \le \ldots$ 
the sequence of the corresponding eigenvalues.
The first consequence of Theorem~\ref{diam} is as follows.

\begin{cor}\label{trace}
Suppose that $c_{kl} = c_{lk} \in L_\infty(\Omega, \Ri)$, $ b_k = c_k = 0$ and 
$a_k \in L_\infty(\Omega, \Ri)$ for all $k,l \in \{ 1,\ldots,d \} $.
Suppose also that $a_0 \ge 0$.
Then there exists a constant $c > 0$, independent of $\vec{a}$, such that
\[ 
\sum_{k=1}^\infty e^{-\lambda_k t} \le c \, t^{-(d-1)}
\]
for all $t \in (0,1]$.
\end{cor}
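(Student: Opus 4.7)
The plan is to use the diamagnetic inequality of Theorem~\ref{diam} to reduce the trace bound for $\cn(\vec a)$ to the analogous bound for the unperturbed operator $\cn = \cn(0)$, and then to deduce the latter from a Weyl-type lower bound on the eigenvalues of $\cn$ coming from the $H^{1/2}(\Gamma)$-coercivity of the form $\gotb$. Under the hypotheses of the corollary, the form $\gota(\vec a)$ on $W^{1,2}(\Omega)$ is symmetric and non-negative, so $\cn(\vec a)$ is non-negative self-adjoint on $L_2(\Gamma)$ and in particular $\lambda_k \ge 0$; the same holds for $\cn$, whose eigenvalues I denote $0 \le \mu_1 \le \mu_2 \le \dots$. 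By self-adjointness,
\[
\sum_{k=1}^\infty e^{-\lambda_k t} = \Tr S_{\vec a}(t) = \|S_{\vec a}(t/2)\|_{\HS}^2,
\]
and analogously $\sum_k e^{-\mu_k t} = \|S(t/2)\|_{\HS}^2$ for every $t>0$.

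The second step is to pass from the pointwise domination $|S_{\vec a}(t/2)\varphi| \le S(t/2)|\varphi|$ of Theorem~\ref{diam} to a Hilbert--Schmidt norm estimate. Since $\cn$ has compact resolvent and $S(t/2)$ is positivity preserving, $S(t/2)$ is Hilbert--Schmidt and admits a non-negative measurable kernel $K_{t/2} \in L_2(\Gamma\times\Gamma)$. A standard duality argument (testing the diamagnetic inequality against characteristic functions) shows that $S_{\vec a}(t/2)$ is then also an integral operator, with a kernel $K_{t/2,\vec a}$ satisfying $|K_{t/2,\vec a}(x,y)| \le K_{t/2}(x,y)$ a.e., and squaring and integrating yields $\|S_{\vec a}(t/2)\|_{\HS}^2 \le \|S(t/2)\|_{\HS}^2$. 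Combined with the first step this reduces the corollary to the estimate $\sum_k e^{-\mu_k t} \le c\,t^{-(d-1)}$ for $t \in (0,1]$.

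For that bound I would exploit the $H^{1/2}$-coercivity of $\gotb$: using $\gotb(\varphi,\varphi)=\gota(u,u)$ for the ${\mathcal A}$-harmonic extension $u$ of $\varphi$, the ellipticity inequality~(\ref{eq3.2}), the assumption $a_0 \ge 0$, and the equivalence $\|u\|_{W^{1,2}(\Omega)} \simeq \|\varphi\|_{H^{1/2}(\Gamma)}$ for such extensions, one obtains constants $c,C>0$ with
\[
\gotb(\varphi,\varphi) \ge c\,\|\varphi\|_{H^{1/2}(\Gamma)}^2 - C\,\|\varphi\|_{L_2(\Gamma)}^2
\qquad (\varphi \in H^{1/2}(\Gamma)).
\]
The min-max principle then yields $\mu_k \ge c\,\nu_k - C$, where $\nu_k$ is the $k$-th $L_2(\Gamma)$-eigenvalue of the form $\|\cdot\|_{H^{1/2}(\Gamma)}^2$. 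The standard Weyl/entropy estimate for the compact embedding $H^{1/2}(\Gamma) \hookrightarrow L_2(\Gamma)$ on the $(d-1)$-dimensional Lipschitz boundary gives $\nu_k \ge c'\,k^{1/(d-1)}$, so that $\mu_k \ge c''\,k^{1/(d-1)} - C''$; a routine comparison of $\sum_k e^{-c''k^{1/(d-1)}t}$ with $\int_0^\infty e^{-c''s^{1/(d-1)}t}\,ds$ then delivers $\sum_k e^{-\mu_k t} \le c\,t^{-(d-1)}$ for $t \in (0,1]$. The delicate point I expect is this final eigenvalue asymptotic on a merely Lipschitz $\Gamma$, where one has no pseudodifferential calculus; I would handle it via entropy-number estimates for Sobolev embeddings on Lipschitz manifolds, or by a bi-Lipschitz comparison with a smooth reference manifold.
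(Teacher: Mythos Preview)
Your argument is correct, but it follows a genuinely different route from the paper's.

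The paper does not reduce to the case $\vec a = 0$ via a Hilbert--Schmidt comparison. Instead it argues entirely with $S_{\vec a}$: the diamagnetic inequality of Theorem~\ref{diam} combined with Corollary~\ref{cor3.5} shows that $S_{\vec a}$ is $L_\infty$-contractive; since the form domain of $\cn(\vec a)$ is $H^{1/2}(\Gamma)$, the Sobolev embedding $H^{1/2}(\Gamma)\hookrightarrow L_{2(d-1)/(d-2)}(\Gamma)$ gives $L_2\to L_p$ smoothing, and the standard extrapolation machinery (as in \cite{EO4}, \cite{EO6}) yields the ultracontractive bound $\|S_{\vec a}(t)\|_{L_1\to L_\infty}\le c\,t^{-(d-1)}$ for $t\in(0,1]$. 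This produces a pointwise kernel bound $|K_{\vec a}(t,z,w)|\le c\,t^{-(d-1)}$, and the trace estimate follows by integrating on the diagonal.

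Your approach trades the ultracontractivity step for a spectral one: kernel domination gives $\|S_{\vec a}(t/2)\|_{\HS}\le\|S(t/2)\|_{\HS}$, and then you control $\sum_k e^{-\mu_k t}$ by min--max together with Weyl-type asymptotics for the embedding $H^{1/2}(\Gamma)\hookrightarrow L_2(\Gamma)$. This is valid, but note two points. First, your sentence ``since $\cn$ has compact resolvent $\ldots$ $S(t/2)$ is Hilbert--Schmidt'' is not justified as stated; compactness of the resolvent alone does not imply trace class or Hilbert--Schmidt. You should first establish the trace bound for $S$ and only then invoke the kernel comparison. Second, the eigenvalue lower bound $\nu_k\ge c'k^{1/(d-1)}$ on a merely Lipschitz boundary is exactly the nontrivial external input you flag; it is available, but it is a heavier black box than the Sobolev embedding and extrapolation used in the paper. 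In return, the paper's route gives more: it delivers the uniform pointwise kernel bound~(\ref{eq6.2}), which is reused in Corollary~\ref{poisson}, whereas your argument yields only the trace estimate.
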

\begin{proof} 
Under the assumptions of the corollary, the operator $\cn(\vec{a})$ is self-adjoint.
In addition, a combination of Theorem~\ref{diam} and Corollary~\ref{cor3.5} shows that 
$S_{\vec{a}}$ is $L_\infty$-contractive.
Now, by the classical Sobolev embeddings the semigroup
$S_{\vec{a}}(t)$ maps $L_2(\Gamma)$ into $L_{\frac{2(d-1)}{d-2}}(\Gamma)$
if $d \geq 3$.
This together with the fact that $S_{\vec{a}}$ is $L_\infty$-contractive 
implies by extrapolation the estimate
\begin{equation}\label{eq6.1}
\| S_{\vec{a}}(t) \|_{L_1(\Gamma) \to L_\infty(\Gamma)} \le c \, t^{-(d-1)}
\end{equation}
for all $t \in (0,1]$.
We refer to \cite{EO4} Theorem~2.6 and \cite{EO6} for additional  details, 
which provide a proof that (\ref{eq6.1}) is also valid if $d = 2$.

The estimate (\ref{eq6.1}) implies that $S_{\vec{a}}(t)$ is given by a kernel 
$K_{\vec{a}}(t,\cdot,\cdot) \colon \Gamma \times \Gamma \to \Ci$ in the sense
\[ 
(S_{\vec{a}}(t) \varphi)(w) = \int_\Gamma K_{\vec{a}}(t,z,w) \, \varphi(z)\, d\sigma(z)
\]
with
\begin{equation}\label{eq6.2}
 | K_{\vec{a}}(t,z,w) | \le c \, t^{-(d-1)}
 \end{equation}
for all $t \in (0,1]$.
 It is well known that the trace of the operator $S_{\vec{a}}(t)$ 
coincides with $\int_\Gamma K_{\vec{a}}(t,z,z) \, d\sigma(z)$ and the corollary 
follows from (\ref{eq6.2}). 
\end{proof}

Note that (\ref{eq6.2}) can also be used to obtain some bounds on the counting function of $\cn(\vec{a})$.
See \cite{AE7}. 

The second consequence we  mention here is that under additional regularity the 
estimate (\ref{eq6.2}) on the heat kernel 
$K_{\vec{a}}$ can be improved into an optimal Poisson bound.

\begin{cor}\label{poisson}
Let  $\Omega$ be a bounded domain  of class $C^{1+ \kappa}$ for some $\kappa > 0$.
Suppose also that $c_{kl} = c_{lk} \in C^\kappa(\Omega, \Ri)$, 
$ b_k = c_k = 0$ and $a_k \in L_\infty(\Omega, \Ri)$ for all $k,l \in \{ 1,\ldots,d \} $.
Suppose in addition that $a_0 \ge 0$.
Then there exists a constant $c > 0$ such that
\[
| K_{\vec{a}}(t,z,w) | 
\leq \frac{c \, (t \wedge 1)^{-(d-1)} \, e^{-\lambda_1 t}}
         {\displaystyle \Big( 1 + \frac{|z-w|}{t} \Big)^d }
\]
for all $z, w \in \Gamma$ and $t > 0$, where $\lambda_1 $ is the 
first eigenvalue of the operator $\cn(\vec{a})$.
\end{cor}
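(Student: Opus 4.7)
The plan is to combine the diamagnetic inequality of Theorem~\ref{diam} with the known short-time Poisson bound for the non-magnetic Dirichlet-to-Neumann kernel and with the spectral decay of $S_{\vec{a}}$ on $L_2(\Gamma)$.

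First, Theorem~\ref{diam} gives $|S_{\vec{a}}(t)\varphi| \le S(t)|\varphi|$ for all $\varphi \in L_2(\Gamma)$ and $t > 0$; this transfers to the pointwise bound $|K_{\vec{a}}(t,z,w)| \le K(t,z,w)$ for a.e.\ $z,w \in \Gamma$, where $K \ge 0$ is the kernel of the non-magnetic semigroup $S$. Under the regularity assumptions of the corollary ($\Omega$ of class $C^{1+\kappa}$ and $c_{kl} \in C^\kappa$), the non-magnetic kernel $K$ satisfies the short-time Poisson bound
\[
K(t,z,w) \le \frac{c\, t^{-(d-1)}}{\bigl(1 + |z-w|/t\bigr)^d}, \qquad t \in (0,1],
\]
by the arguments of \cite{EO4}, \cite{EO6} and \cite{AE7}. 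I take this as a black-box input. Since $e^{-\lambda_1 t} \in [e^{-\lambda_1}, 1]$ on $(0,1]$, combining the two previous displays gives the claimed bound on $(0,1]$ up to the constant.

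To extend to $t > 1$ I use the spectral gap of $\cn(\vec{a})$. Under the hypotheses the coefficients $c_{kl} = c_{lk}$, $a_l$ and $a_0$ are real and $b_k = c_k = 0$, so the form $\gota(\vec{a})$ is Hermitian, and ellipticity together with $a_0 \ge 0$ shows that it is nonnegative. Hence $\cn(\vec{a})$ is self-adjoint and nonnegative and $\|S_{\vec{a}}(t)\|_{L_2 \to L_2} = e^{-\lambda_1 t}$. For $t \ge 1$ I factor $S_{\vec{a}}(t) = S_{\vec{a}}(1/2)\, S_{\vec{a}}(t-1)\, S_{\vec{a}}(1/2)$, and the associated kernel identity together with Cauchy--Schwarz yields
\[
|K_{\vec{a}}(t,z,w)| \le \|K_{\vec{a}}(\tfrac12,z,\cdot)\|_{L_2(\Gamma)} \, e^{-\lambda_1 (t-1)} \, \|K_{\vec{a}}(\tfrac12,\cdot,w)\|_{L_2(\Gamma)}.
\]
The two $L_2$-norms are bounded uniformly in $z,w$ by the ultracontractive estimate~(\ref{eq6.1}) at $t = 1/2$ together with $|\Gamma| < \infty$; hence $|K_{\vec{a}}(t,z,w)| \le c'\, e^{-\lambda_1 t}$ for $t \ge 1$. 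Since $|z-w| \le \diam \Omega$, the factor $(1 + |z-w|/t)^{-d}$ is bounded below on $\{t \ge 1\}$ and $(t \wedge 1)^{-(d-1)} = 1$ there, so this factor can be inserted at the cost of an absolute constant. Combining with the $(0,1]$ case completes the proof.

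The only genuinely non-routine ingredient is the short-time Poisson bound for the non-magnetic kernel, which encodes the Schauder-type regularity available under the $C^{1+\kappa}$ and $C^\kappa$ assumptions; this is where the regularity hypotheses enter and is the step I would expect to be the main obstacle if one attempted a self-contained proof. The reduction to the non-magnetic case via the diamagnetic inequality, and the passage from short to long time via the spectral decay on $L_2(\Gamma)$, are standard.
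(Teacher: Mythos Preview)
Your argument is correct and follows essentially the same route as the paper: combine the diamagnetic inequality (Theorem~\ref{diam}) with the known Poisson bound for the non-magnetic kernel from \cite{EO6}, and then insert the exponential factor $e^{-\lambda_1 t}$ via the spectral decay of the self-adjoint semigroup $S_{\vec{a}}$ on $L_2(\Gamma)$. The only cosmetic difference is that the paper cites \cite{EO6} for the Poisson bound on all of $(0,\infty)$ and refers to \cite{EO4} for the large-time improvement, whereas you quote only the short-time bound and spell out the standard $S_{\vec{a}}(1/2)\,S_{\vec{a}}(t-1)\,S_{\vec{a}}(1/2)$ factorisation explicitly.
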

\begin{proof} 
The estimate 
\[
| K_{\vec{a}}(t,z,w) | 
\leq \frac{c \, (t \wedge 1)^{-(d-1)} }
         {\displaystyle \Big( 1 + \frac{|z-w|}{t} \Big)^d }
\]
for all $z, w \in \Gamma$ and $t > 0$ follows immediately from Theorem~\ref{diam} 
and Theorem 1.1 in \cite{EO6}.
The improvement  upon the factor $ e^{-\lambda_1 t}$ follows as at the 
end of the proof of Theorem~1.2 in \cite{EO4} (page~4084).
\end{proof}

\begin{cor} \label{poissonholder}
Adopt the notation and assumptions of Corollary~\ref{poisson}.
In addition suppose that $d \geq 3$.
Then for all $\varepsilon,\tau' \in (0,1)$ and $\tau > 0$ there exist $c,\nu > 0$ 
such that 
\begin{eqnarray*}
\lefteqn{
|K_{\vec{a}}(t,z,w) - K_{\vec{a}}(t,z',w')|
} \hspace*{10mm}  \\*
& \leq & c \, (t \wedge 1)^{-(d-1)} \, 
   \Big( \frac{|z-z'|+|w-w'|}{t + |z-w|} \Big)^\nu \,
    \frac{1}{\displaystyle \Big( 1 + \frac{|z-w|}{t} \Big)^{d-\varepsilon}} \, 
    (1 + t)^\nu \, e^{- \lambda_1 t}
\end{eqnarray*}
for all $z,w,z',w' \in \Gamma$ and $t > 0$ with 
$|z-z'| + |w-w'| \leq \tau \, t + \tau' \, |z-w|$.
\end{cor}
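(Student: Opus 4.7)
The plan is to mirror the two-step structure of the proof of Corollary~\ref{poisson}: first establish a Hölder-in-space Poisson estimate for $K_{\vec{a}}$ without the $e^{-\lambda_1 t}$ factor, and then insert the spectral-gap factor via a semigroup splitting at large times.

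For the first step, I would invoke the Hölder regularity counterpart of Theorem~1.1 from the companion paper \cite{EO6}, applied directly to the magnetic kernel $K_{\vec{a}}$. Under the present regularity hypotheses ($\Omega$ of class $C^{1+\kappa}$, $c_{kl} \in C^\kappa$), the De~Giorgi--Nash--Moser-type boundary regularity machinery used there produces such a Hölder estimate from three inputs: the Poisson bound on $K_{\vec{a}}$, supplied by Corollary~\ref{poisson}; the corresponding Poisson-type bound on $\partial_t K_{\vec{a}}$, which follows from analyticity of $S_{\vec{a}}$ at the cost of an extra $(t \wedge 1)^{-1}$ factor; and a boundary Hölder estimate for solutions of the elliptic equation ${\mathcal A}(\vec{a}) u = 0$ in $\Omega$. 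The magnetic potentials $a_k$ enter only as bounded lower-order perturbations of the bulk operator, and so do not affect the validity of these regularity arguments. The restriction $d \geq 3$ enters through the Sobolev embedding used in the underlying Moser iteration, exactly as in Corollary~\ref{trace}.

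For the second step, I would follow the end of the proof of Theorem~1.2 in \cite{EO4}: for $t \geq 1$, I factor $S_{\vec{a}}(t) = S_{\vec{a}}(1) \circ S_{\vec{a}}(t-1)$, use the spectral-gap estimate $\|S_{\vec{a}}(t-1)\|_{L_2 \to L_2} \leq c\, e^{-\lambda_1 (t-1)}$ on the middle factor, and combine with the Hölder-Poisson bound from the first step applied to $S_{\vec{a}}(1)$; for $0 < t \leq 1$ the desired estimate reduces directly to the bound from the first step, since $e^{-\lambda_1 t}$ is bounded below by a positive constant. The main obstacle I anticipate is the first step: one must verify that the Hölder theorem of \cite{EO6} really accommodates the complex first-order terms originating from $-i a_k$ in ${\mathcal A}(\vec{a})$. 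Since $a_k \in L_\infty(\Omega, \Ri)$, this amounts to a lower-order perturbation of a divergence-form operator with $C^\kappa$ principal part, for which boundary Hölder regularity is known to be stable; nevertheless, carefully tracking the constants in the Hölder exponent $\nu$ and in the tail power $d - \varepsilon$ through the adapted proof is the most delicate part of the argument.
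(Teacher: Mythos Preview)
Your approach is plausible but differs from the paper's in a meaningful way. The paper does not redo any Hölder regularity theory for the magnetic operator via De~Giorgi--Nash--Moser/\cite{EO6}; instead it argues by \emph{interpolation}. One ingredient is the Poisson bound on $K_{\vec a}$ with the $e^{-\lambda_1 t}$ factor already in place, supplied by Corollary~\ref{poisson} (hence ultimately by the diamagnetic inequality). The second ingredient is a uniform H\"older bound on the kernel taken from \cite{EW1}, Theorem~5.5. These two are then combined exactly as in the proof of Theorem~5.11 of \cite{EW1}: one bounds $|K_{\vec a}(t,z,w)-K_{\vec a}(t,z',w')|$ by its own $\theta$-power times its $(1-\theta)$-power, controls the first factor by the H\"older bound and the second by twice the Poisson bound, and chooses $\theta$ small to retain the tail exponent $d-\varepsilon$. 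The spectral-gap factor then comes along automatically from the Poisson bound, so no separate large-time splitting step is needed.

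Compared with your route, the paper's interpolation is shorter and sidesteps precisely the obstacle you flag: you never have to reopen the proof of the H\"older--Poisson theorem of \cite{EO6} and track how the complex first-order terms $-i a_k$ propagate through the boundary Moser iteration. What your approach buys, if carried out, is self-containment relative to \cite{EW1}; but you would still need an independent uniform H\"older estimate for $K_{\vec a}$, which is exactly what \cite{EW1} Theorem~5.5 supplies off the shelf. If you want to align with the paper, replace your Step~1 by ``cite the uniform H\"older bound of \cite{EW1} Theorem~5.5 for $K_{\vec a}$ and interpolate with Corollary~\ref{poisson} as in \cite{EW1} Theorem~5.11,'' and drop Step~2 entirely.
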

\begin{proof}
This follows by interpolation from the Poisson bounds of Corollary~\ref{poisson} and 
uniform H\"older bounds in \cite{EW1} Theorem~5.5.
The argument is similar to the proof of Theorem~5.11 in \cite{EW1}.
\end{proof}

\subsection*{Acknowledgements} 
This work was carried out when the second named author was visiting the University of
Auckland and the first named author was visiting the University of Bordeaux.
Both authors wish to thank the universities for hospitalities.
The research of A.F.M. ter Elst  is partly supported by the 
Marsden Fund Council from Government funding, 
administered by the Royal Society of New Zealand. 
The research of E.M. Ouhabaz is partly supported by the ANR 
project RAGE,  ANR-18-CE-0012-01.

\end{document}